\newcommand{\mat}[1]{\mbox{\boldmath{$#1$}}}
\newtheorem{thm}{Theorem}
\newtheorem{cor}{Corollary}
\title{B$_{1}$-EPG representations using block-cutpoint trees}
\author{
  Vitor Tocci F. de Luca\\
  Universidade do Estado do Rio de Janeiro\\
  Rio de Janeiro, Brazil\\
  \texttt{toccivitor8@gmail.com} \\
   \And
 Fabiano de S. Oliveira \\
  Universidade do Estado do Rio de Janeiro\\
  Rio de Janeiro, Brazil\\
  \texttt{fabiano.oliveira@ime.uerj.br} \\
  \AND
  Jayme L. Szwarcfiter \\
  Universidade Federal do Rio de Janeiro \hspace{0.5cm} Universidade do Estado do Rio de Janeiro \\
  Rio de Janeiro, Brazil \\
  \texttt{jayme@nce.ufrj.br} \\
}
\begin{document}
\maketitle

\begin{abstract}
In this paper, we are interested in the edge intersection graphs of paths of a grid where each path has at most one bend, called B$_{1}$-EPG graphs and first introduced by Golumbic et al (2009). We also consider a proper subclass of B$_{1}$-EPG, the $\llcorner$-EPG graphs, which allows paths only in ``$\llcorner$'' shape. We show that two superclasses of trees are B$_1$-EPG (one of them being the cactus graphs). On the other hand, we show that the block graphs are $\llcorner$-EPG and provide a linear time algorithm to produce $\llcorner$-EPG representations of generalization of trees. These proofs employed a new technique from previous results in the area based on block-cutpoint trees of the respective graphs.
\end{abstract}

\keywords{Edge intersection graph\and Block-cutpoint trees\and Block graphs\and Cactus graphs}

\section{Introduction}\label{intro}
Let $\mathcal{P}$ be a family of nontrivial paths on a rectangular grid $\mathcal{G}$. We define the \emph{edge intersection graph} EPG($\mathcal{P}$) of $\mathcal{P}$ as the graph whose vertex set is $\mathcal{P}$ and such that $(P, Q)$ is an edge of EPG($\mathcal{P}$) if and only if paths $P$ and $Q$ share at least one grid edge of $\mathcal{G}$. A graph $G$ is called an \emph{edge intersection graph of paths on a grid (EPG)} if $G = EPG(\mathcal{P})$ for some family of paths $\mathcal{P}$ on a grid $\mathcal{G}$, and $\mathcal{P}$ is an \emph{EPG representation} of $G$. EPG graphs were first introduced by Golumbic et al in \cite{golumbic2009edge} motivated from circuit layout problems \cite{Brady}. Figure~\ref{fig: example} illustrates the EPG-graph corresponding to the family of paths presented in the figure.
\begin{figure}[htb]
    \centering
    \includegraphics[scale=1]{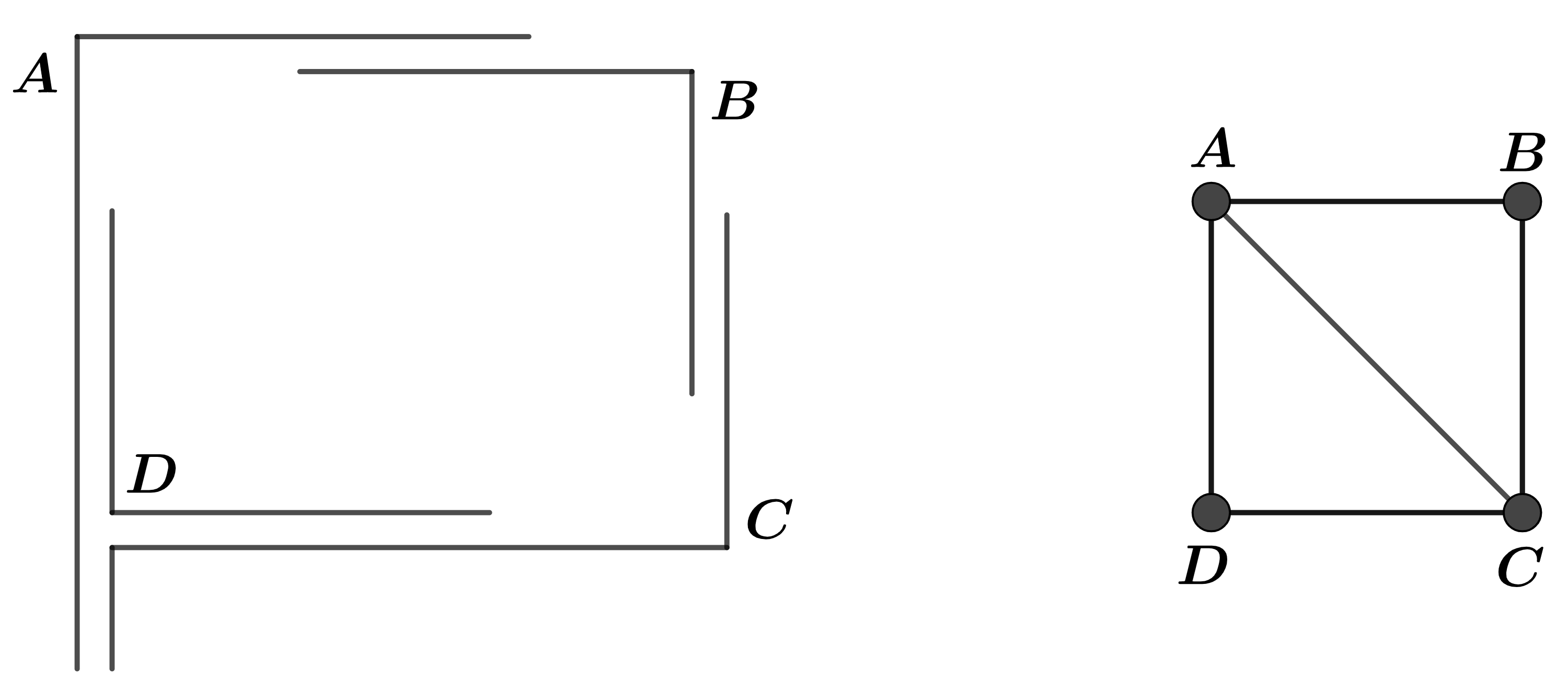}
    \caption{A B$_{2}$-EPG representation $\mathcal{P}$ and its corresponding EPG graph EPG($\mathcal{P}$).}
    \label{fig: example}
\end{figure}

A turn of a path at a grid point is called a \emph{bend} and the grid point in which a bend occurs is called a \emph{bend point}. An EPG representation is a \emph{B$_{k}$-EPG representation} if each path has at most $k$ bends. A graph that has a B$_{k}$-EPG representation is called B$_{k}$-EPG. Therefore, the graph defined in Figure~\ref{fig: example} is B$_{2}$-EPG, as the representation shows. However, it is possible to show that there is a B$_{1}$-EPG representation of $G$ and, thus, $G$ is also B$_{1}$-EPG. The time complexity of recognizing B$_{k}$-EPG is polynomial for $k = 0$ \cite{golumbic2009edge}, and NP-hard for $k=1$ \cite{Heldt_bend} and $k=2$ \cite{Pergel}, whereas is unknown for other values of $k$.

A \emph{block} $B$ of a graph $G$ is a maximal biconnected subgraph of $G$. A vertex $v$ of a connected graph $G$ is a \emph{cut vertex} if $G-v$ is disconnected. For a graph $G$, we define its \emph{block-cutpoint tree} \cite{harary1969graph} (BC-tree) $T$ as follows. There is a vertex in $T$ corresponding each block of $G$, called a \emph{block vertex}, and a vertex for each cut vertex of $G$, called as such in $T$. A cut vertex $c$ forms an edge with a block vertex $b$ if the block corresponding to $b$ contains $c$ in $G$. The only existing vertices and edges of $T$ are those previously described. Figure~\ref{fig: BC-tree example} depicts a graph and its respective BC-tree.
\begin{figure}[htb]

\center
\subfigure[][]{\includegraphics[scale=0.8]{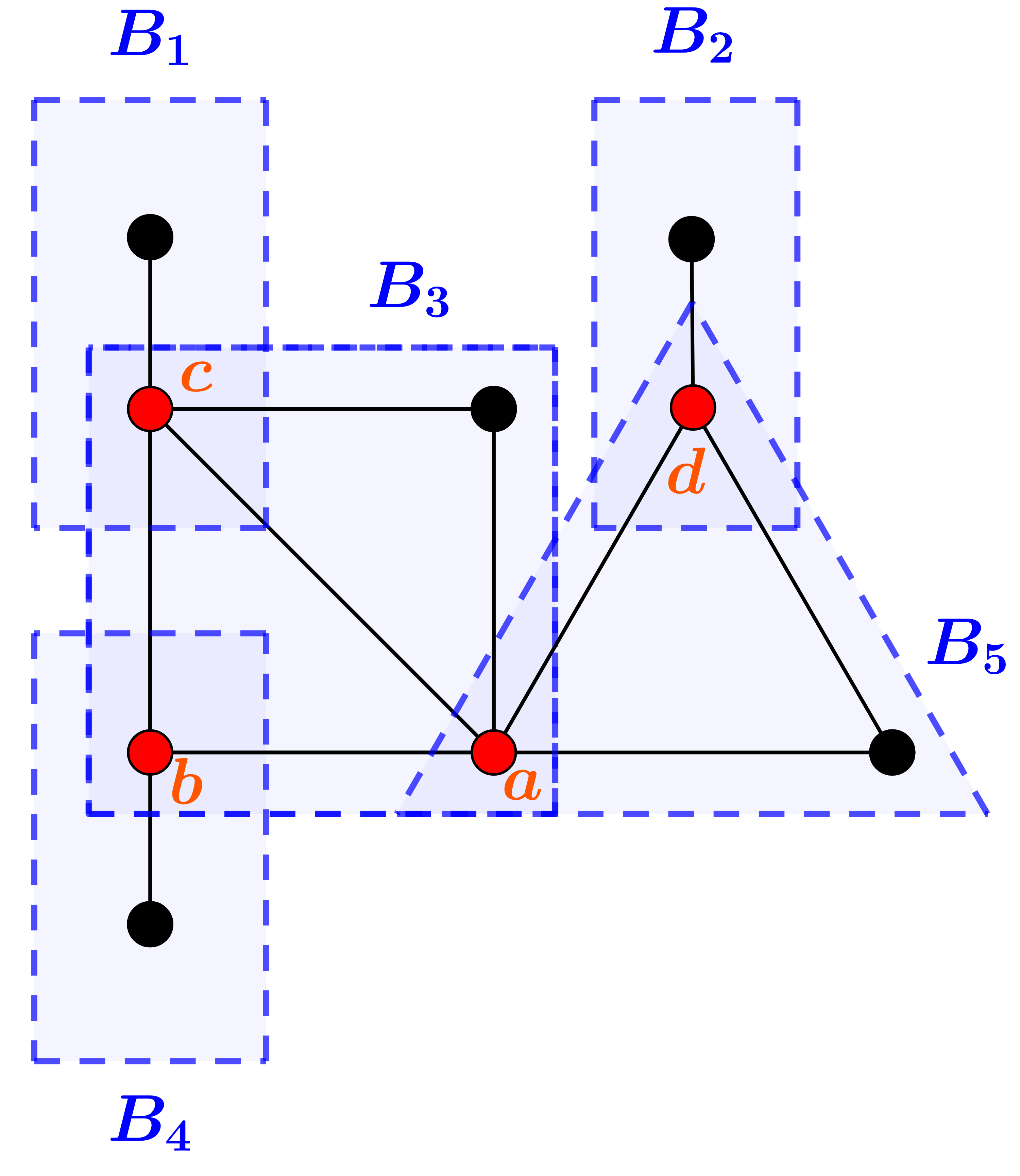} \label{fig: example2}}
\qquad
\subfigure[][]{\includegraphics[scale=0.8]{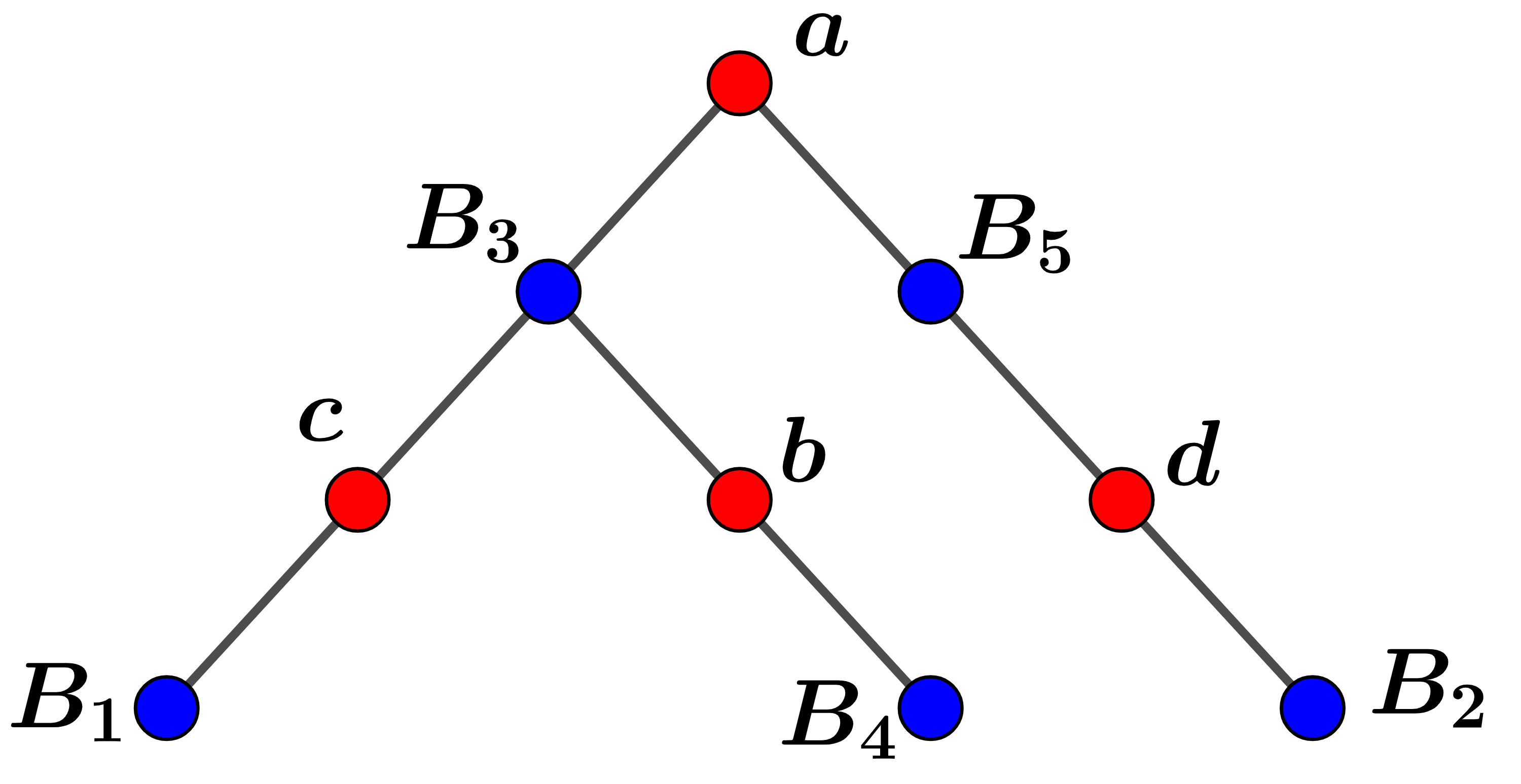} \label{fig: example2_BCtree}}
\caption{A graph and its respective BC-tree. The cut vertices are marked in red.}
\label{fig: BC-tree example}

\end{figure}

A \emph{universal vertex} is a vertex of $G$ that is adjacent to all other vertices of $G$. For $X\subseteq V(G)$, we denote by $G[X]$ the subgraph induced by $X$. A cycle with $k$ vertices is denoted by $C_k$.

In B$_{1}$-EPG representations, each path has one of the following shapes: $\llcorner, \ulcorner, \lrcorner, \urcorner$, besides horizontal or vertical segments. One may consider more restrictive subclasses of B$_{1}$-EPG by limiting the type of bends allowed in the representation. This arises the definition of ``x''-EPG graphs, where ``x'' stands for a sequence of path shapes allowed in the class. For example, the $\llcorner\urcorner$-EPG graphs are those in which only the ``$\llcorner$" or the ``$\urcorner$" shapes are allowed. Although that might imply the study of $2^4$ different subclasses, corresponding to all subsets of $\{\llcorner, \ulcorner, \lrcorner, \urcorner\}$, only the $\llcorner$-EPG, $\lrcorner\llcorner$-EPG, $\llcorner\urcorner$-EPG and $\llcorner\ulcorner\urcorner$-EPG may be considered, since all others do not define distinct subclasses (their representations are isomorphic to these four up to 90 degree rotations and reflections).

\section{A B$_1$-EPG representation of a superclass of trees}

In this section, we describe a B$_1$-EPG representation of a superclass of trees, inspired on the representation of trees described in \cite{golumbic2009edge}. The novelty of the following results are the usage of BC-trees to obtain EPG representations.

\begin{thm} \label{theorem1}
Let $G$ be a graph such that every block of $G$ is B$_{1}$-EPG and every cut vertex $v$ of $G$ is a universal vertex in the blocks of $G$ in which $v$ is contained. Then, $G$ is B$_1$-EPG.
\end{thm}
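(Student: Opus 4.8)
The plan is to build a B$_1$-EPG representation of $G$ by induction on the structure of the block-cutpoint tree $T$ of $G$, assembling the representation block by block. Root $T$ at some block vertex. Processing blocks in a top-down (BFS/DFS) order over $T$, I will place each block $B$ in its own disjoint region of the grid, using the fact that $B$ is B$_1$-EPG to get a representation $\mathcal{P}_B$ of it; the key point is how to ``glue'' these representations together along cut vertices. The hypothesis that each cut vertex $v$ is \emph{universal} in every block containing it is what makes the gluing possible: in a B$_1$-EPG representation of a block, a universal vertex is a path that shares an edge with every other path, and I will argue (or invoke the standard fact from \cite{golumbic2009edge}) that one can always arrange the representation of $B$ so that the path $P_v$ representing a chosen universal vertex $v$ has a free endpoint — an end of the path lying on a grid edge not used by any other path of $B$ — available for attaching the rest of the graph hanging off $v$.

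First I would set up notation: for a cut vertex $v$, let $B_1,\dots,B_t$ be the blocks containing $v$; in $T$ these are the block-neighbors of $v$. When I process $B_i$ I place it in a fresh axis-aligned region far from everything placed so far, and I reserve, for each cut vertex $w$ of $G$ lying in $B_i$, a ``stub'' — a short horizontal or vertical grid segment extending out of the region along a free endpoint of $P_w$ inside $\mathcal{P}_{B_i}$. Second, when gluing a child block $B_i$ to its parent at the shared cut vertex $v$: the path $P_v$ currently consists of the already-built pieces from $v$'s parent side; I extend $P_v$ by a monotone segment into the new region reserved for $B_i$ and identify it with the path playing the role of $v$ in $\mathcal{P}_{B_i}$. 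I must be careful that $P_v$ remains a path with at most one bend — this forces the discipline that each time a path is extended it is extended along a straight continuation of one of its ends, and that a path receives at most one ``genuine'' bend over the whole construction. A clean way to guarantee this: give $P_v$, inside each block representation, the shape of a straight segment except in the one block where its (single) bend is spent, and route all extensions collinearly with the appropriate arm.

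The main obstacle is precisely this bend-budget bookkeeping for the cut-vertex paths: a cut vertex $v$ may lie in many blocks, and in each of them $P_v$ must touch every other path of that block (universality), yet across all blocks $P_v$ is allowed only a single bend total. The resolution I would pursue is to exploit that a universal vertex of a B$_1$-EPG block can be represented by a path that is "mostly straight" — concretely, I expect that in the representations of \cite{golumbic2009edge} (and for the blocks here) one can choose $P_v$ to be a single straight segment within its block, reserving the one allowed bend to be introduced, if ever, by the global routing between two particular blocks. Then each $P_v$ has at most one bend globally, the non-cut vertices keep whatever (at most one) bend their block representation gave them, and two paths share a grid edge in the assembled representation if and only if they share one in some common block — which happens exactly when the corresponding vertices are adjacent in $G$, since distinct blocks occupy disjoint grid regions and the only shared edges across regions are the collinear extensions internal to a single $P_v$. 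Finally I would note the construction is constructive and, given B$_1$-EPG representations of the blocks, runs in time linear in the size of $G$ plus the sizes of those block representations, which foreshadows the linear-time algorithmic claims made later in the paper.
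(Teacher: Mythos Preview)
Your overall architecture matches the paper's: induction along the block-cutpoint tree, with the blocks represented in disjoint regions and glued together along the cut vertices. You also correctly isolate the crux of the argument, namely the bend-budget for a cut-vertex path that must thread through several blocks while remaining a B$_1$-path.

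The gap is that you do not actually prove the one statement on which everything rests: that in any B$_1$-EPG block $B$ with a universal vertex $v$, one can choose a B$_1$-EPG representation of $B$ in which $P_v$ is a straight segment. You write that you ``expect'' this and would ``invoke the standard fact from \cite{golumbic2009edge}'', but no such fact is available there; this is precisely the technical heart of the paper's proof, and it is not obvious. The paper establishes it by a concrete sequence of local modifications: starting from an arbitrary B$_1$-EPG representation with $P_v$ an $\llcorner$-path bent at a point $p$, it (i) makes every other universal path coincident with $P_v$, (ii) trims from every path the portion extending below or to the left of $p$ (arguing that no edge-intersection is lost because any lost intersection would contradict adjacency to $v$), and then (iii) ``unbends'' the whole picture at $p$, rotating the half-plane to the right of $p$ so that the horizontal arm of $P_v$ becomes collinear with its vertical arm. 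Step~(ii) is what guarantees step~(iii) introduces no new bends. Without this lemma your gluing scheme cannot be carried out: if $P_v$ is forced to have a genuine bend inside one block's representation, you have no budget left to route it into the other blocks containing $v$, and the proposal collapses.

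A secondary point: the paper roots the BC-tree at a cut vertex and strengthens the induction hypothesis to demand that the root's path be a bendless vertical segment with the whole representation lying to its right. This packaging is what lets the recursively obtained subrepresentations be attached (after a $90^\circ$ rotation) to the horizontal arms of the $\llcorner$-shaped cut-vertex paths. Your ``stub'' idea gestures at the same mechanism but, again, its correctness depends on the straightening lemma above.
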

\begin{proof}
The result is trivial if $G$ does not have cut vertices, since $G$ consists of a single block. Therefore, we assume from now on that there is a cut vertex in $G$. The theorem is proved by induction. Actually, we prove a stronger claim, stated as follows: given any graph $G$ satisfying the theorem conditions and a BC-tree $T$ of $G$ rooted at some cut vertex $r$, there exists a B$_1$-EPG representation $\mathcal{R} = \{P_v \mid v \in V(G) \}$ of $G$ in which:
\begin{itemize}
\item [(i)] $P_r$ is a vertical path with no bends in $\mathcal{R}$;
\item [(ii)] all paths but $P_r$ are constrained within the horizontal portion of the grid defined by $P_r$ and at the right of it.
\end{itemize}

Let $B_1, B_2, \ldots, B_t$ be the block vertices which are children of $r$ and let $T_{i1}, T_{i2}, \ldots, T_{ij_{i}}$ be the subtrees rooted at $B_i$, for all $1\leq i\leq t$ (see Figure~\ref{fig: BC-tree}). The leaves of $T$ are the blocks of $G$ having exactly one cut vertex.
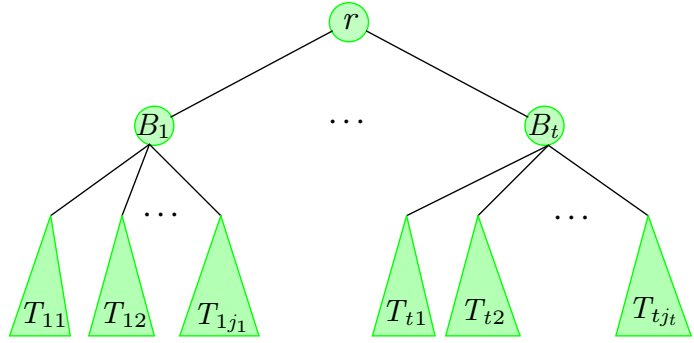
\begin{figure}[htb]
    \centering
    \definecolor{qqffqq}{rgb}{0,1,0}
\scalebox{1.3}{
\begin{tikzpicture}[line cap=round,line join=round,>=triangle 45,x=1.0cm,y=1.0cm]
\clip(1.76,1.92) rectangle (8.99,5.62);
\draw [color=qqffqq,fill=qqffqq,fill opacity=0.25] (5.32,5.32) circle (0.2cm);
\draw [color=qqffqq,fill=qqffqq,fill opacity=0.25] (3.33,4.26) circle (0.2cm);
\draw [color=qqffqq,fill=qqffqq,fill opacity=0.25] (7.32,4.26) circle (0.2cm);
\fill[color=qqffqq,fill=qqffqq,fill opacity=0.3] (1.85,2.11) -- (2.27,3.34) -- (2.47,2.11) -- cycle;
\fill[color=qqffqq,fill=qqffqq,fill opacity=0.3] (2.66,2.11) -- (3,3.34) -- (3.33,2.11) -- cycle;
\fill[color=qqffqq,fill=qqffqq,fill opacity=0.3] (3.59,2.11) -- (4.01,3.34) -- (4.4,2.11) -- cycle;
\fill[color=qqffqq,fill=qqffqq,fill opacity=0.3] (5.56,2.11) -- (5.91,3.34) -- (6.2,2.11) -- cycle;
\fill[color=qqffqq,fill=qqffqq,fill opacity=0.3] (6.31,2.11) -- (6.64,3.34) -- (7.07,2.11) -- cycle;
\fill[color=qqffqq,fill=qqffqq,fill opacity=0.3] (8.05,2.11) -- (8.38,3.34) -- (8.82,2.11) -- cycle;
\draw (3.28,4.07)-- (2.27,3.34);
\draw (3.28,4.07)-- (3,3.34);
\draw (3.28,4.07)-- (4.01,3.34);
\draw (7.36,4.06)-- (5.91,3.34);
\draw (7.36,4.06)-- (6.64,3.34);
\draw (7.36,4.06)-- (8.38,3.34);
\draw [color=qqffqq] (1.85,2.11)-- (2.27,3.34);
\draw [color=qqffqq] (2.27,3.34)-- (2.47,2.11);
\draw [color=qqffqq] (2.47,2.11)-- (1.85,2.11);
\draw [color=qqffqq] (2.66,2.11)-- (3,3.34);
\draw [color=qqffqq] (3,3.34)-- (3.33,2.11);
\draw [color=qqffqq] (3.33,2.11)-- (2.66,2.11);
\draw [color=qqffqq] (3.59,2.11)-- (4.01,3.34);
\draw [color=qqffqq] (4.01,3.34)-- (4.4,2.11);
\draw [color=qqffqq] (4.4,2.11)-- (3.59,2.11);
\draw [color=qqffqq] (5.56,2.11)-- (5.91,3.34);
\draw [color=qqffqq] (5.91,3.34)-- (6.2,2.11);
\draw [color=qqffqq] (6.2,2.11)-- (5.56,2.11);
\draw [color=qqffqq] (6.31,2.11)-- (6.64,3.34);
\draw [color=qqffqq] (6.64,3.34)-- (7.07,2.11);
\draw [color=qqffqq] (7.07,2.11)-- (6.31,2.11);
\draw [color=qqffqq] (8.05,2.11)-- (8.38,3.34);
\draw [color=qqffqq] (8.38,3.34)-- (8.82,2.11);
\draw [color=qqffqq] (8.82,2.11)-- (8.05,2.11);
\draw (5.15,5.23)-- (3.5,4.35);
\draw (5.5,5.23)-- (7.15,4.35);
\draw (4.9,4.45) node[anchor=north west] {$$ \ldots $$};
\draw (3,3.5) node[anchor=north west] {$$ \ldots $$};
\draw (7.2,3.47) node[anchor=north west] {$$ \ldots $$};
\draw (5.12,5.53) node[anchor=north west] {$ \mathit{r} $};
\draw (3.01,4.51) node[anchor=north west] {\small{$\mathit{B}_1 $}};
\draw (7.02,4.51) node[anchor=north west] {\small{$\mathbf{ \mathit{B_t} }$}};
\draw (1.82,2.58) node[anchor=north west] {\small{$ \mathit{T}_{11} $}};
\draw (2.64,2.58) node[anchor=north west] {\small{$ \mathit{T}_{12} $}};
\draw (3.6,2.57) node[anchor=north west] {\small{$ \mathit{T}_{1\mathit{j}_{1}} $}};
\draw (5.53,2.6) node[anchor=north west] {\small{$ \mathit{T}_{\mathit{t}1} $}};
\draw (6.33,2.61) node[anchor=north west] {\small{$ \mathit{T}_{\mathit{t}2} $}};
\draw (8.05,2.63) node[anchor=north west] {\small{$ \mathit{T}_{t\mathit{j_{t}}} $}};
\end{tikzpicture}}
    \caption{The rooted BC-tree $T$ of a graph.}
    \label{fig: BC-tree}
\end{figure}
From $T$, build the representation $\mathcal{R}$ of $G$ as follows. First, build an arbitrary vertical path $P_r$ in the grid $\mathcal{G}$, corresponding the root $r$. Next, divide the vertical portion of  $\mathcal{G}$ defined by $P_r$ and at the right of it into $t$ vertical subgrids, $\mathcal{G}_{1}, \mathcal{G}_{2},\ldots,\mathcal{G}_{t}$, with a row space between them such that the $i$-th subgrid will contain the paths corresponding to the cut vertices that are descendants of $B_i$ in $T$.
So, each subgrid $\mathcal{G}_{i}$ is constructed as follows. We first represent the children of $B_{i}$ as disjoint $\llcorner$-shaped paths, all sharing the same grid column in which $P_{r}$ lies, since by the hypothesis, the children of $B_{i}$ are all adjacent to $r$. Now, for each $B_i$, we build the following paths:
\begin{itemize}
    \item[-] those corresponding to vertices of $B_i$ that are not cut vertices of $G$ (as those vertices in black in Figure~\ref{fig: example2}); let us call the set of such vertices as $B'_i$;
    \item[-] those belonging to the induced subgraphs of $G$ corresponding to the BC-trees $T_{i1}, T_{i2}, \ldots, T_{ij_{i}}$.
\end{itemize}
These paths will be placed on the marked regions of $G_i$ of Figure~\ref{fig:figureA}.
\begin{figure}[htb]
    \centering
    \includegraphics[scale=1.2]{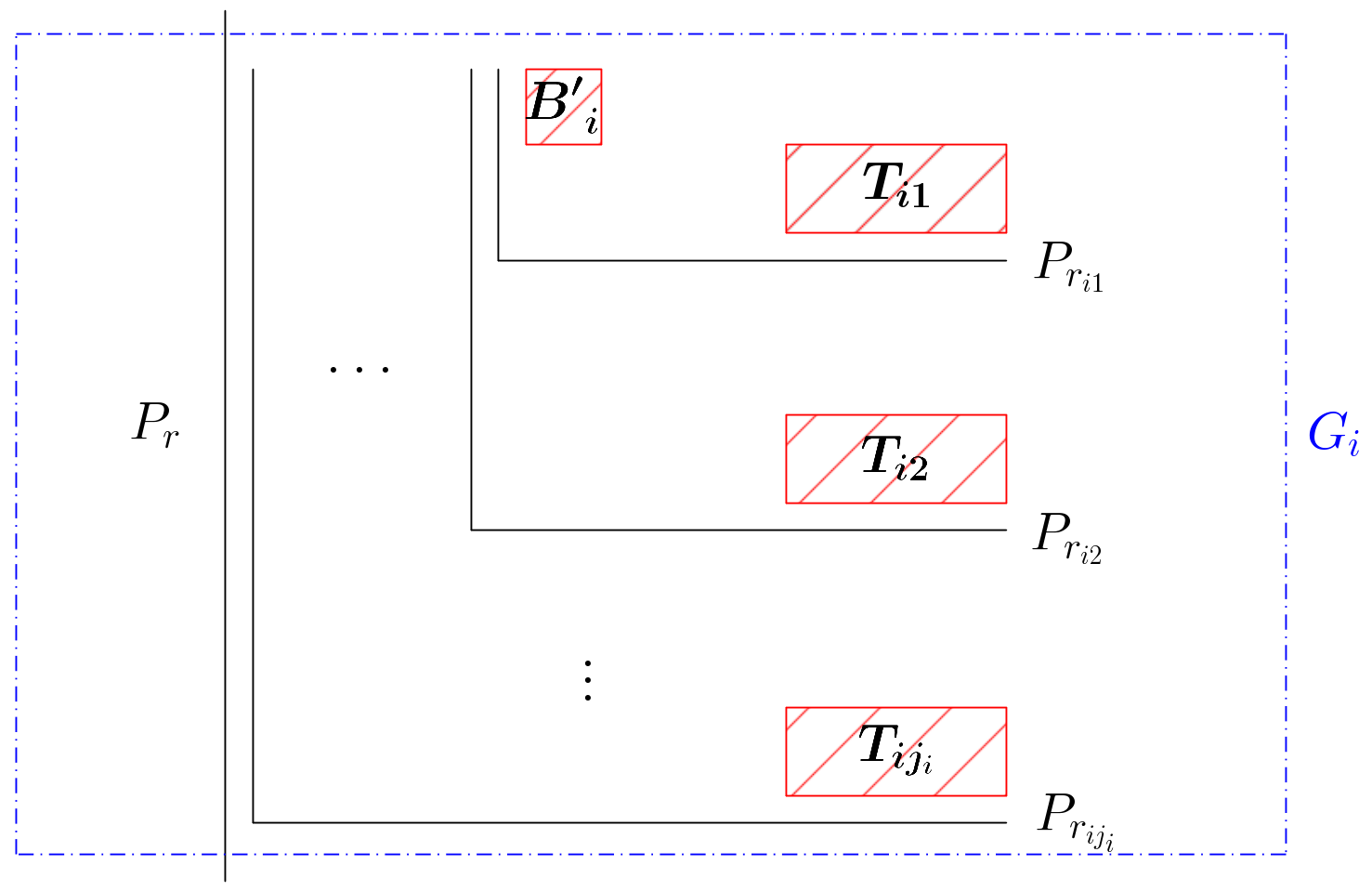}
    \caption{A subgrid $G_i$.}
    \label{fig:figureA}
\end{figure}
So, it remains to define how the paths belonging to the regions $B'_i$ and $T_{ij}$ will be build, for all $1 \leq j \leq j_i$.

So, by the claim hypothesis, $r$ is universal to $B_i$ and $B_i$ is a B$_1$-EPG graph. Therefore, let $\mathcal{R}'$ be a B$_1$-EPG representation of $B_i$. Without loss of generality, considering the operation of rotating the representation, let $P'_r$ be an $\llcorner$-path corresponding $r$ in $\mathcal{R}'$ and let $p$ be its bend point. Since $r$ is universal to $B_i$, all other paths must share a grid edge with $P'_r$. Transform $\mathcal{R}'$ in the following way:

\begin{itemize}
    \item[-] For all $P'_z$, a path of $\mathcal{R}'$ that intersects all other paths of $\mathcal{R}'$ and is not coincident to $P'_r$, modify $P'_z$ by making it coincident to $P'_r$.
    
    \item[-] For all $P'_z$, a path of $\mathcal{R}'$ which contains $p$ and the grid point immediately below of $p$, modify $P'_z$ by removing the part of the path that goes from $p$ downwards (that is, making $p$ an endpoint of $P'_z$). Such a modification does not change the intersections of $P'_z$. Clearly, by construction, it does not increase the intersections. To see that it does not decrease as well, note that if $P'_z$ lost an edge intersection to some path $P'_w$, it is because $P'_w$ would intersect $P'_z$ only in the ``leg'' that was removed, which would imply that $P'_w$ does not intersect $P'_r$, an absurd.
 
    \item[-] For all $P'_z$, a path of $\mathcal{R}'$ which contains $p$ and the grid point immediately at the left of $p$, modify $P'_z$ in an analogous way, removing the part of the paths that are to the left of $p$.
\end{itemize}

 Finally, $\mathcal{R}'$ can be transformed such that all universal vertices become vertical paths, by ``unbending'' them at the grid point $p$ (see Figure~\ref{fig: Subcases}). 

\begin{figure}[htb]

\center
\includegraphics[scale=1]{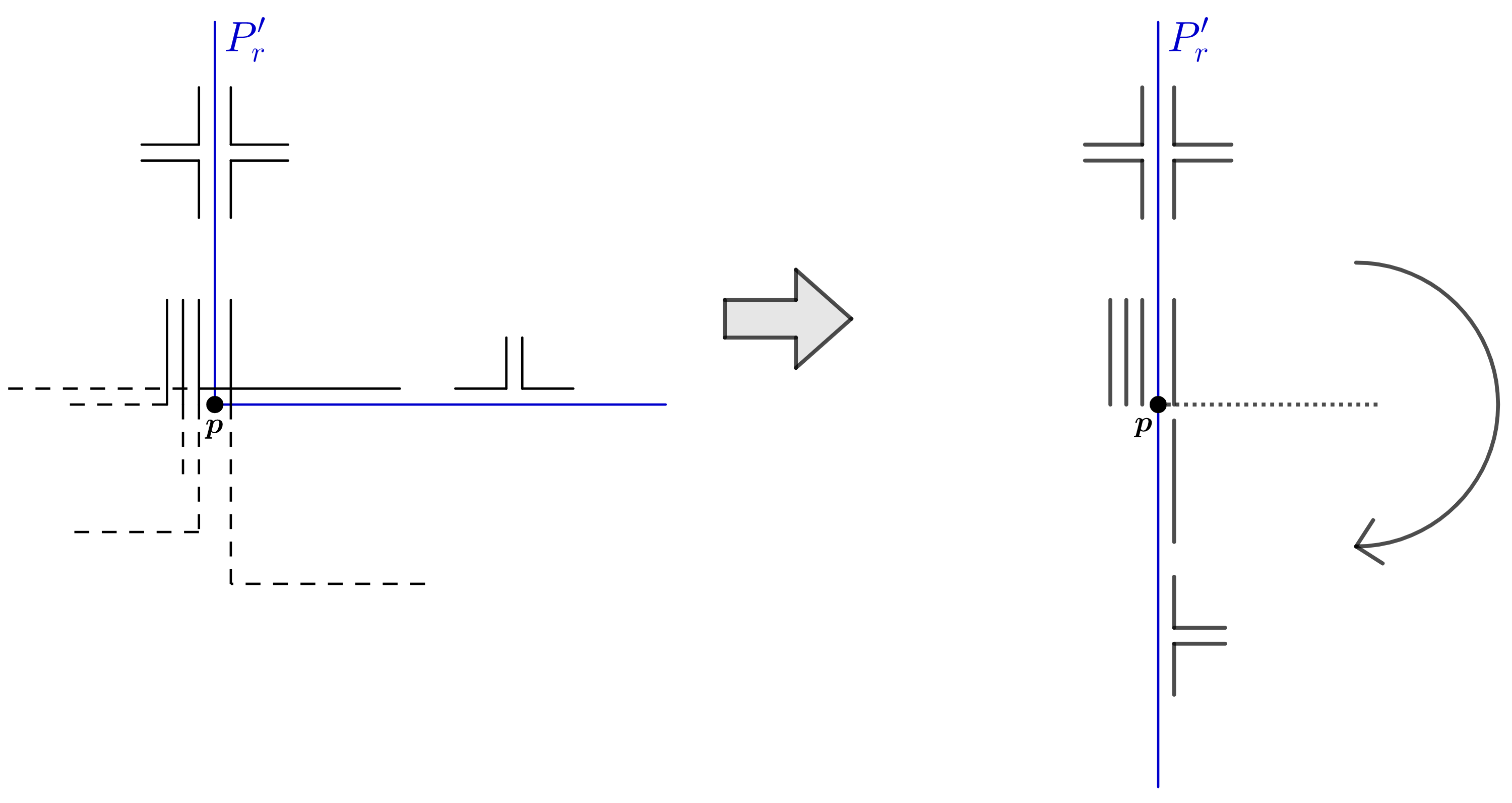}
\caption{Transformations of $P'_z$.}
\label{fig: Subcases}

\end{figure}


For the $T_{ij}$ portion of the representation, let $r_{ij}$ be the root of $T_{ij}$. Applying induction hypothesis, we obtain B$_1$-EPG representations of each subtree that have vertical paths representing each root and the entire representation is bounded as described previously in (ii). Thus, we can clearly attach each one of the representations to its respective portion of the model being built, rotated 90 degrees in counter-clockwise (see Figure~\ref{fig: B1-EPG BC-tree 2}). This concludes the proof.
\begin{figure}[htb]

\center
\subfigure[][]{\includegraphics[scale=1]{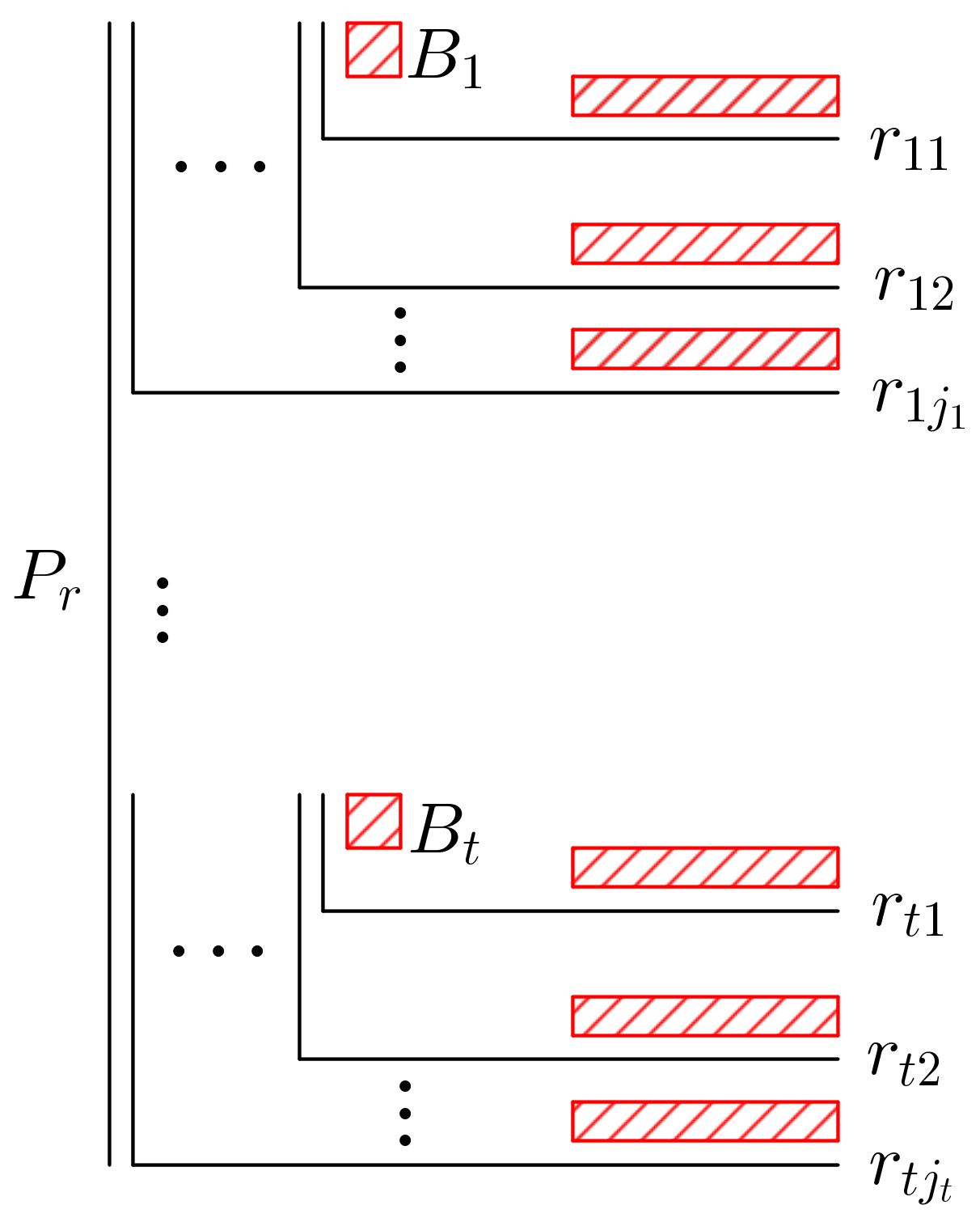} \label{fig: desenho}}
\qquad
\subfigure[][]{\includegraphics[scale=1.5]{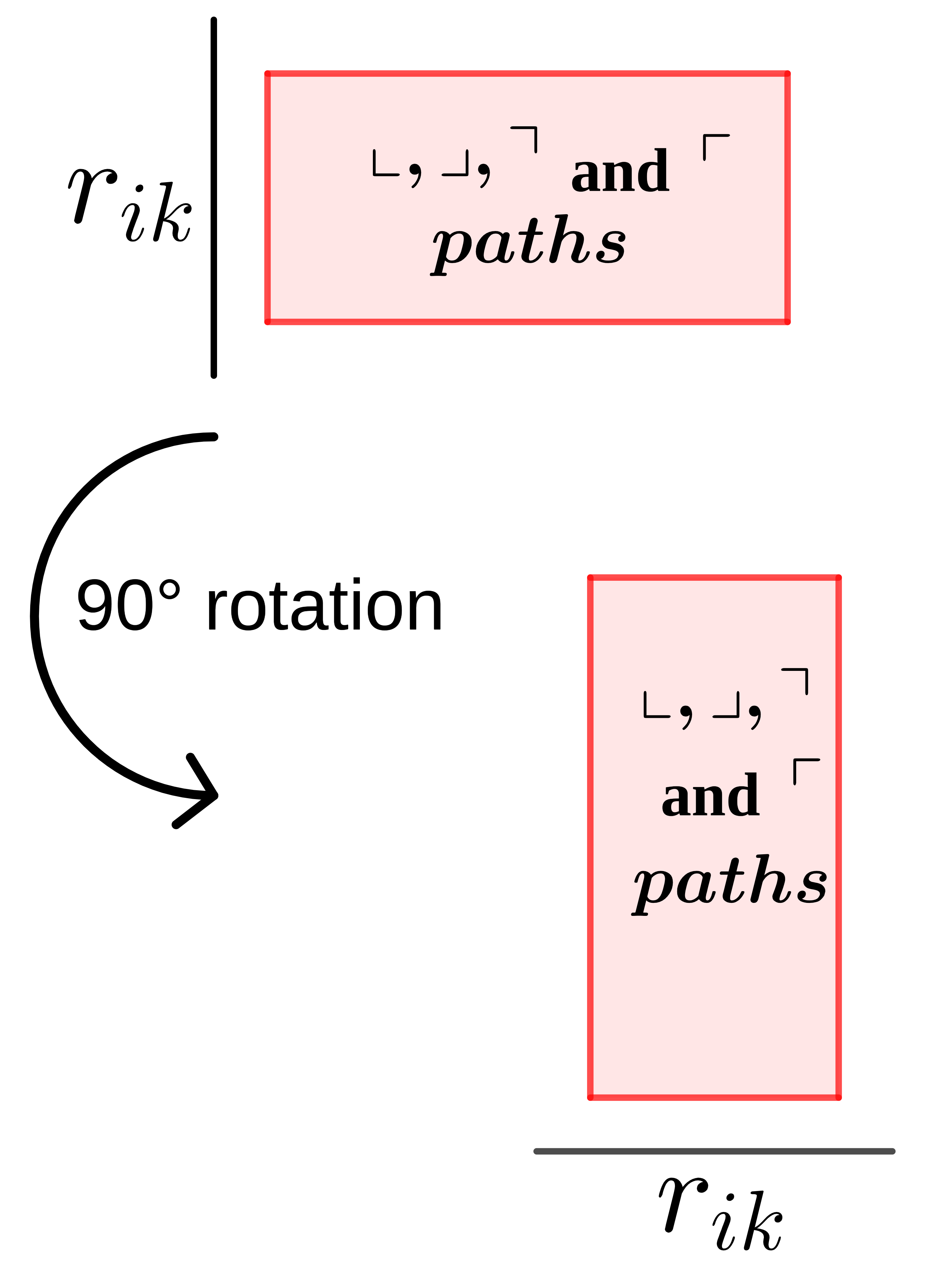} \label{fig: desenho2_1}}
\caption{B$_1$-EPG representation of $G$ after induction step.}
\label{fig: B1-EPG BC-tree 2}

\end{figure}
\end{proof}

It is assumed in Theorem~\ref{theorem1} that the blocks of $G$ are B$_1$-EPG. If, instead, it is assumed that the blocks of $G$ are $\llcorner$-EPG, we get an $\llcorner$-EPG representation of a superclass of trees. The representation to be shown yields a distinct $\llcorner$-representation of trees of the one described in \cite{golumbic2009edge}.
\begin{thm} \label{theorem2}
Let $G$ be a graph such that every block of $G$ is $\llcorner$-EPG and every cut vertex $v$ of $G$ is a universal vertex in the blocks of $G$ in which $v$ is contained. Then, $G$ is $\llcorner$-EPG.
\end{thm}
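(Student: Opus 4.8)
The plan is to follow the structure of the proof of Theorem~\ref{theorem1}, again establishing by induction on the BC-tree a stronger statement: for every $G$ satisfying the hypotheses and every BC-tree $T$ of $G$ rooted at a cut vertex $r$, there is an $\llcorner$-EPG representation $\mathcal{R}=\{P_v\mid v\in V(G)\}$ such that (i) $P_r$ is a vertical segment with no bend, and (ii) every path other than $P_r$ lies within the rows spanned by $P_r$ and in the columns that are at the right of (or equal to) the column of $P_r$. The base case, when $G$ has no cut vertex, follows from $G$ being a single $\llcorner$-EPG block, together with the observation --- used repeatedly below --- that a universal vertex of an $\llcorner$-EPG block may always be taken as a vertical segment.

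For the block step, consider a child block $B_i$ of $r$ in $T$. Since $B_i$ is $\llcorner$-EPG and $r$ is universal in $B_i$, I would start from an $\llcorner$-EPG representation $\mathcal{R}'$ of $B_i$ and, exactly as in Theorem~\ref{theorem1}, after a possible reflection, make the universal vertices of $B_i$ coincident with $P'_r$, trim the paths meeting $P'_r$ near its bend point, and finally unbend $P'_r$ into a vertical segment. Here one checks that each of these operations keeps every path in $\llcorner$ shape (unbending produces a plain vertical segment, which is admissible, and trimming only shortens legs), so that $\mathcal{R}'$ remains an $\llcorner$-EPG representation, while the intersection graph is preserved by the same argument as in Theorem~\ref{theorem1}. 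After this, $B_i$ is drawn with $r$ and the cut vertices $r_{i1},\dots,r_{ij_i}$ rooting the subtrees $T_{i1},\dots,T_{ij_i}$ (all of which are universal in $B_i$) occupying a common column, and the non-cut vertices $B'_i$ drawn as $\llcorner$-paths in the columns at its right.

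Then comes the gluing step, which is where the argument genuinely diverges from that of Theorem~\ref{theorem1}. There the subtree representations, obtained from the induction hypothesis with their roots vertical, were attached after a $90^\circ$ counter-clockwise rotation; here that is not available, since rotating a $\llcorner$-path by $90^\circ$ produces a $\lrcorner$-path. Instead I would keep each subtree representation in its native orientation and create room for it by stretching the common column: below the portion of the column carrying $B_i$, reserve pairwise disjoint bands of fresh rows $E_1,\dots,E_{j_i}$; extend $P_{r_{ij}}$ downward through $E_1\cup\cdots\cup E_j$ (so the $P_{r_{ij}}$ remain nested vertical segments, still coincident inside the original portion, hence still pairwise intersecting and still meeting every $B'_i$-path); extend $P_r$ itself downward through all of $E_1\cup\cdots\cup E_{j_i}$; and finally place the induction-hypothesis representation of $T_{ij}$ inside band $E_j$, with its root identified with the relevant part of $P_{r_{ij}}$ and its remaining paths at the right of the common column. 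Since the $B'_i$-paths stay in the original rows and each $T_{ij}$ lives in its own band of fresh rows strictly to the right of the column, no spurious intersections arise --- in particular no interior path of $T_{ij}$ meets $P_r$ or any $P_{r_{ij'}}$ with $j'\neq j$, which is exactly what is needed since $r$ and the $r_{ij'}$ are not adjacent to the interior of $T_{ij}$ in $G$ --- and invariants (i)--(ii) for $r$ are maintained.

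I expect the main obstacle to be precisely this last step: verifying that the non-cut vertices $B'_i$ of each block and the several subtree representations can coexist in the half-plane at the right of the common column without overlap, and that the cut vertices $r_{ij}$ --- which the block drawing forces to be (nearly) coincident with $P_r$ --- can be ``pulled apart'' enough to host their subtrees while still inducing exactly the adjacencies of $G$. The band-stretching above is meant to settle this, and since it only inserts rows and extends vertical segments, the whole construction runs in linear time, which also yields the algorithmic claim announced in the abstract.
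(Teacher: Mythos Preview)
Your overall plan and the block step are essentially those of the paper. The gluing step, however, contains a genuine gap.

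You propose to keep each subtree $T_{ij}$ in its native vertical orientation, extend $P_r$ and the $P_{r_{ij}}$ downward as nested vertical segments on a common column, and place $T_{ij}$ in a fresh band $E_j$ with its root identified with the portion of $P_{r_{ij}}$ lying in $E_j$. The claim that ``no interior path of $T_{ij}$ meets $P_r$ or any $P_{r_{ij'}}$ with $j'\neq j$'' is false. By your own invariant~(ii), paths of $T_{ij}$ may occupy the root column, and in fact they \emph{must}: any vertex $w$ adjacent to $r_{ij}$ inside $T_{ij}$ has $P_w$ sharing a grid edge with the vertical segment $P_{r_{ij}}$, hence $P_w$ carries a vertical portion on that very column inside $E_j$. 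But you have extended $P_r$ (and every $P_{r_{ij'}}$ with $j'>j$) through all of $E_j$ on that same column, so $P_w$ picks up edge-intersections with them, although $w$ lies in a different block and is adjacent to neither $r$ nor $r_{ij'}$ in $G$. Strengthening~(ii) to ``strictly to the right of the column'' is not an option either, since then no neighbour of the root could meet the root at all.

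The paper's remedy is simpler than you assumed. A $90^\circ$ counter-clockwise rotation alone does turn $\llcorner$ into $\lrcorner$, as you observe; but following it with a horizontal flip --- equivalently, reflecting the whole representation across the diagonal $y=x$ --- sends $\llcorner$ back to $\llcorner$ while turning vertical segments into horizontal ones. One therefore draws each $r_{ij}$ as an $\llcorner$-path with its vertical leg on $P_r$'s column and its own horizontal leg going right, applies this diagonal reflection to the inductive representation of $T_{ij}$ so that its root becomes horizontal, and attaches it along that horizontal leg, entirely away from the common column. This keeps every path in $\llcorner$ shape and avoids the collision above.
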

\begin{proof}
The theorem is proved by induction. Once again, a stronger claim is actually proved, stated as follows: given any graph $G$ satisfying the theorem conditions and a BC-tree $T$ of $G$ rooted at some cut vertex $r$, there exists an $\llcorner$-EPG representation $\mathcal{R} = \{P_v \mid v \in V(G) \}$ of $G$ in which:
\begin{itemize}
\item [(i)] $P_r$ is a vertical path with no bends in $\mathcal{R}$;
\item [(ii)] all paths but $P_r$ are constrained within the horizontal portion of the grid defined by $P_r$ and at the right of it.
\end{itemize}

Let $B_1, B_2, \ldots, B_t$ be the block vertices which are children of $r$ and let $T_{i1}, T_{i2}, \ldots, T_{ij_{i}}$ be the subtrees rooted at $B_i$, for all $1\leq i\leq t$ (see Figure~\ref{fig: BC-tree}). The leaves of $T$ are the blocks of $G$ having exactly one cut vertex.

From $T$, build the representation $\mathcal{R}$ of $G$ as follows. First, build an arbitrary vertical path $P_r$ in the grid $\mathcal{G}$, corresponding the root $r$. Next, divide the vertical portion of  $\mathcal{G}$ defined by $P_r$ and at the right of it into $t$ vertical subgrids, $\mathcal{G}_{1}, \mathcal{G}_{2},\ldots,\mathcal{G}_{t}$, with a row space between them such that the $i$-th subgrid will contain the paths corresponding to the cut vertices that are descendants of $B_i$ in $T$.
So, each subgrid $\mathcal{G}_{i}$ is constructed as follows. We first represent the children of $B_{i}$ as disjoint $\llcorner$-shaped paths, all sharing the same grid column in which $P_{r}$ lies, since by the hypothesis, the children of $B_{i}$ are all adjacent to $r$. Now, for each $B_i$, we build the following paths:
\begin{itemize}
    \item[-] those corresponding to vertices of $B_i$ that are not cut vertices of $G$; let us call the set of such vertices as $B'_i$;
    \item[-] those belonging to the induced subgraphs of $G$ corresponding to the BC-trees $T_{i1}, T_{i2}, \ldots, T_{ij_{i}}$.
\end{itemize}
These paths will be placed on the marked regions of $G_i$ of Figure~\ref{fig:figureA}.
So, it remains to define how the paths belonging to the regions $B'_i$ and $T_{ij}$ will be build, for all $1 \leq j \leq j_i$.

So, by the claim hypothesis, $r$ is universal to $B_i$ and $B_i$ is a B$_1$-EPG graph. Therefore, let $\mathcal{R}'$ be an $\llcorner$-EPG representation of $B_i$. Without loss of generality, considering the operation of rotating the representation, let $P'_r$ be an $\llcorner$-path corresponding $r$ in $\mathcal{R}'$ and let $p$ be its bend point. Since $r$ is universal to $B_i$, all other paths must share a grid edge with $P'_r$. Transform $\mathcal{R}'$ in the following way:
\begin{itemize}
    \item[-] For all $P'_z$, an $\llcorner$-path of $\mathcal{R}'$ that intersects $P'_r$ only in its vertical (resp. horizontal) portion, shorten the horizontal (resp. vertical) ``leg'' of $P'_z$ until that ``leg'' is degenerated into a single point, that is, transform $P'_z$ into a line segment by removing the horizontal (resp. vertical) ``leg'' of the path.

    \item[-] For all $P'_z$, a path of $\mathcal{R}'$ which contains $p$ and the grid point immediately below of $p$, modify $P'_z$ by removing the part of the path that goes from $p$ downwards (that is, making $p$ an endpoint of $P'_z$). Such a modification does not change the intersections of $P'_z$. Clearly, by construction, it does not increase the intersections. To see that it does not decrease as well, note that if $P'_z$ lost an edge intersection to some path $P'_w$, it is because $P'_w$ would intersect $P'_z$ only in the ``leg'' that was removed, which would imply that $P'_w$ does not intersect $P'_r$, an absurd.
 
    \item[-] For all $P'_z$, a path of $\mathcal{R}'$ which contains $p$ and the grid point immediately at the left of $p$, modify $P'_z$ in an analogous way, removing the part of the paths that are to the left of $p$.
\end{itemize}

Finally, $\mathcal{R}'$ can be transformed into an interval model such all universal vertices become vertical paths, by ``unbending'' them at the grid point $p$ (see Figure~\ref{fig: Subcases_2}).

\begin{figure}[htb]

\center
\includegraphics[scale=1]{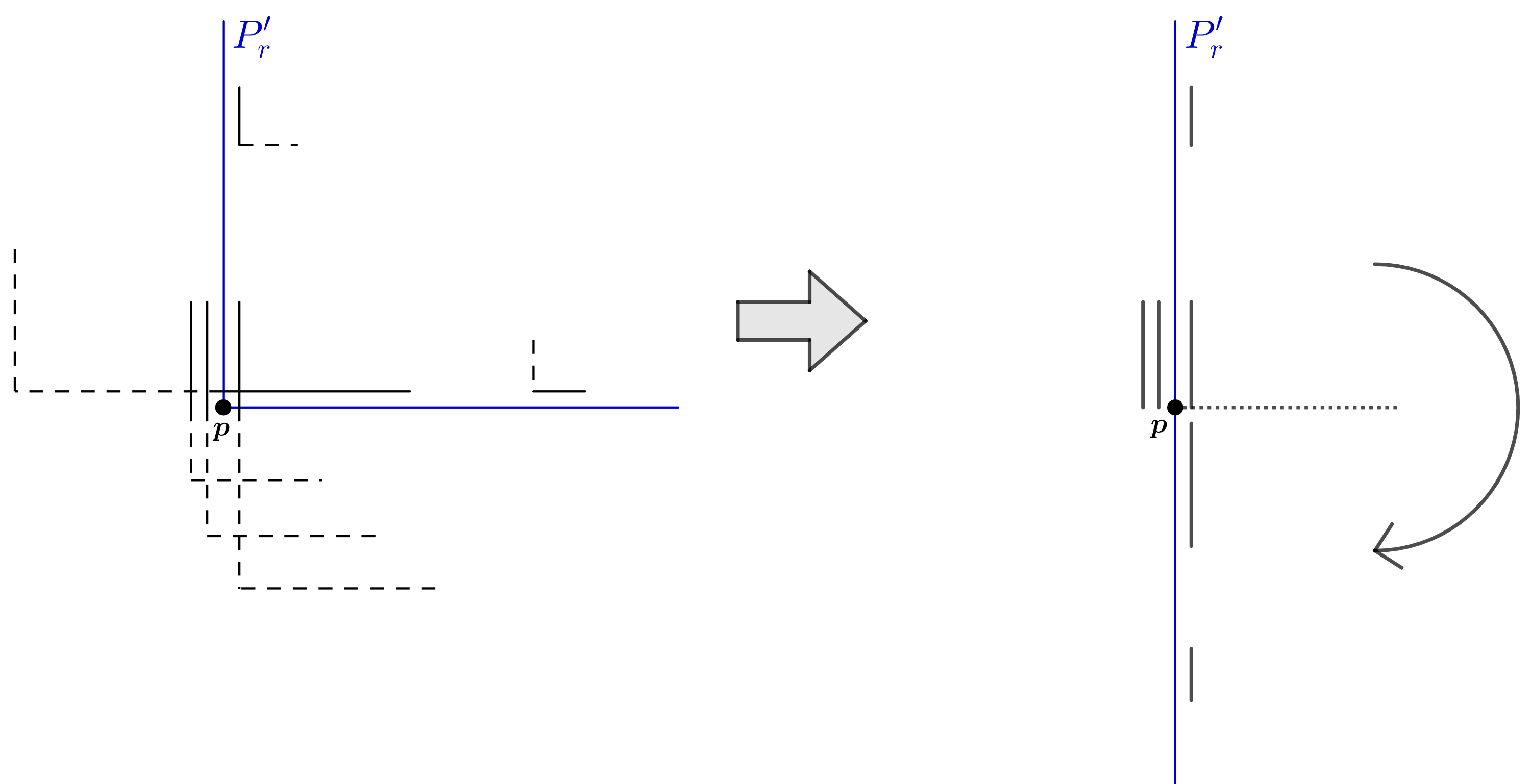}
\caption{Transformations of $P'_z$.}
\label{fig: Subcases_2}

\end{figure}

For the $T_{ij}$ portion of the representation, let $r_{ij}$ be the root of $T_{ij}$. Applying induction hypothesis, we obtain $\llcorner$-EPG representations of each subtree that have vertical paths representing each root and the entire representation is bounded as described previously in (ii). Thus, we can clearly attach each one of the representations to its respective portion of the model being built, rotated 90 degrees in counter-clockwise. Note that due to the rotation of the $\llcorner$-EPG representations, the paths are transformed into $\lrcorner$-shaped paths. In order to obtain an $\llcorner$-EPG representation of $G$, flip the representations of each subtree horizontally (see Figures~\ref{fig: desenho} and \ref{fig: desenho_3}). This concludes the proof.
\begin{figure}[htb]

\center
\includegraphics[scale=1.5]{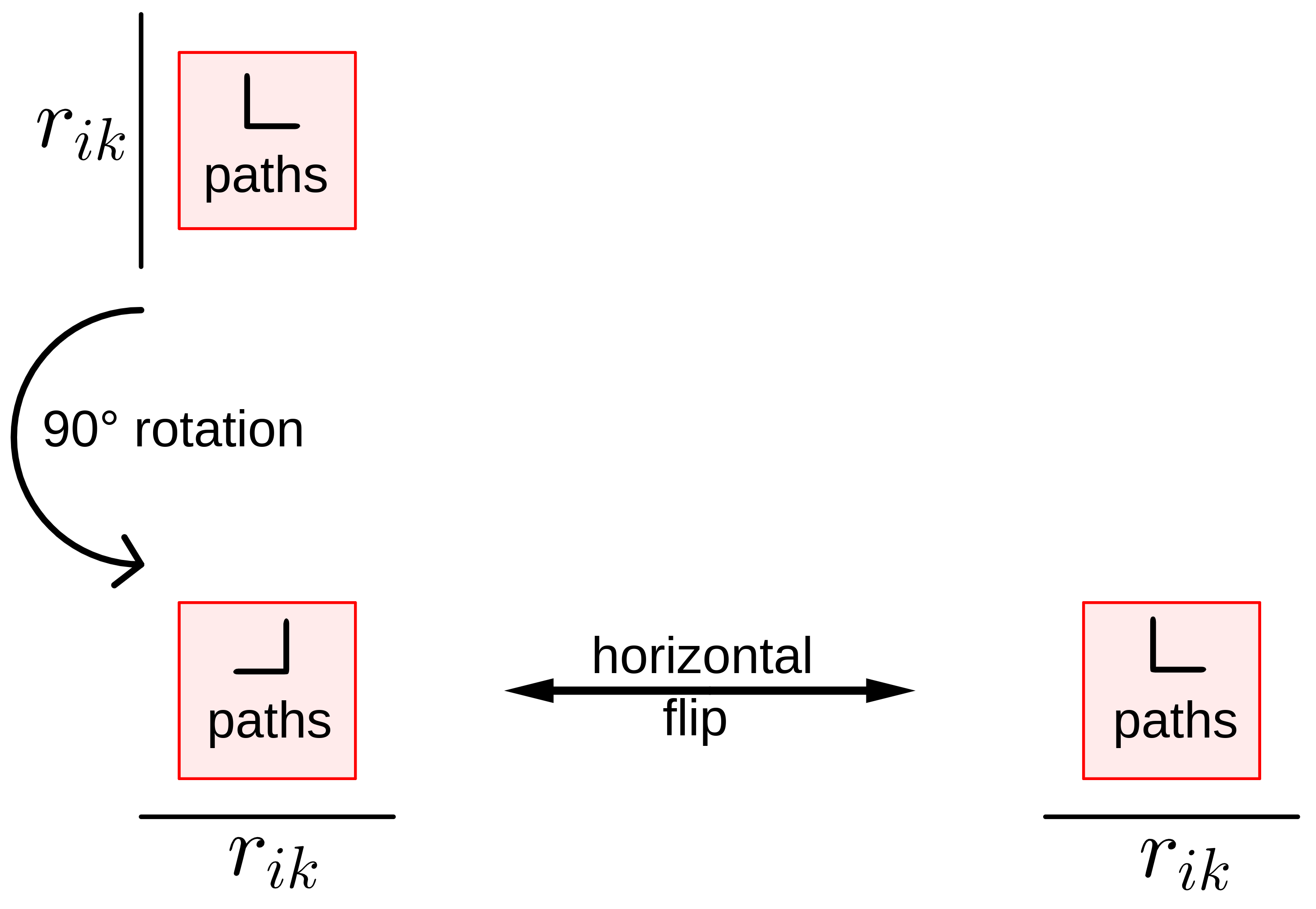}
\caption{Rotating $\llcorner$-EPG representations obtained after induction step.}
\label{fig: desenho_3}

\end{figure}
\end{proof}
A graph $G$ is a \emph{block graph} if every block of $G$ is a clique. In \cite{asinowski2012some}, the authors showed that block graphs are B$_1$-EPG in a proof by contradiction. As a consequence of Theorem~\ref{theorem1}, we have the following result.
\begin{cor}
Block graphs are $\llcorner$-EPG.
\end{cor}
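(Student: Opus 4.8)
The plan is to obtain the corollary as an immediate specialization of Theorem~\ref{theorem2}. Let $G$ be a block graph. By definition, every block of $G$ is a clique, so to apply Theorem~\ref{theorem2} I would need to verify exactly two things: that every complete graph $K_n$ is $\llcorner$-EPG, and that every cut vertex of $G$ is a universal vertex in each block that contains it.

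The second condition is essentially free: a block of $G$ is a clique $K_n$, and every vertex of a clique is adjacent to all the other vertices of that clique, hence is universal in it; in particular this holds for the cut vertices of $G$. So the only real content is the first condition, and I would dispatch it by exhibiting an explicit $\llcorner$-EPG representation of $K_n$. The cheapest construction is to take $n$ pairwise coincident single-segment paths — for instance all equal to one fixed horizontal unit edge of the grid; any two of them share that grid edge, so the edge intersection graph of this family is precisely $K_n$, and since no path bends at all, this is (trivially) an $\llcorner$-EPG representation. If one prefers a representation that genuinely uses the $\llcorner$ shape, one can instead place $n$ $\llcorner$-paths sharing a common bend point $p$, with the $i$-th path having a vertical leg of length $i$ and a horizontal leg of length $n+1-i$; then every pair of paths shares the vertical grid edge just above $p$, again giving exactly $K_n$.

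Having checked both hypotheses, I would simply invoke Theorem~\ref{theorem2} on $G$: it produces an $\llcorner$-EPG representation of $G$, which is what the corollary asserts.

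I do not expect a genuine obstacle here — all the work is already done in Theorem~\ref{theorem2}, and the corollary is a one-line consequence of the two observations above. The only point requiring a moment's care is the convention on what counts as an ``$\llcorner$-EPG'' representation: if degenerate (straight-segment) paths are disallowed, one must use the second, honestly bent construction of $K_n$ rather than the coincident-segment one; either way the argument goes through unchanged.
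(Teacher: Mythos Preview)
Your proposal is correct and matches the paper's approach: the corollary is presented there as an immediate consequence of the preceding theorem (the paper writes ``Theorem~\ref{theorem1}'' but this is evidently a slip for Theorem~\ref{theorem2}, since the conclusion is about $\llcorner$-EPG). You supply the two easy verifications---that cliques are $\llcorner$-EPG and that every vertex of a clique is universal in it---which the paper leaves entirely implicit, so your write-up is, if anything, more complete than the original.
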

It is known that trees are $\llcorner$-EPG. In \cite{golumbic2009edge}, the authors described a recursive procedure to construct an $\llcorner$-EPG representation of them. Note that trees are in particular block graphs and therefore can also be represented with the construction of Theorem~\ref{theorem2}. As an example, compare the resulting representations of both constructions, considering the tree $T$ in Figure~\ref{fig: Tree T}.
\begin{figure}[htb]
    \centering
    \tikzset{
    treenode/.style={align=center, inner sep=0pt},
    node_green/.style = {treenode, circle, black, font=\bfseries, draw=black, fill=green!30, text width=0.5cm}
}

\begin{tikzpicture}[level/.style={sibling distance=2cm, level distance=1.5cm}, scale=0.7]
    \tikzstyle{level 1} = [sibling distance=3.5cm]
    \node[node_green]{\mat{v_1}}
        child{node[node_green]{\mat{v_2}}
            child{node[node_green]{\mat{v_4}}}
            child{node[node_green]{\mat{v_5}}}
        }
        child{node[node_green]{\mat{v_3}}
            child{node[node_green]{\mat{v_6}}}
            child{node[node_green]{\mat{v_7}}
                child{node[node_green]{\mat{v_8}}}
                child{node[node_green]{\mat{v_9}}}
            }
        }
;
\end{tikzpicture}
    \caption{Tree $T$.}
    \label{fig: Tree T}
\end{figure}
The $\llcorner$-EPG representation of $T$ described in \cite{golumbic2009edge} is shown in Figure~\ref{fig: GOLUMBIC Tree T}. And the $\llcorner$-EPG representation of $T$ given by Theorem~\ref{theorem1} is shown in Figure~\ref{fig: VFJ Tree T}.

Finally, note that the induction proof of Theorem~\ref{theorem2} yields a recursive algorithm to produce an $\llcorner$-EPG representation given as input a graph $G$ holding the theorem conditions. This algorithm can be recognized in linear time, since the recognition of interval graphs (needed in order to obtain a model of each $B'_i$ defined in the theorem's proof) can be done in linear time \cite{Booth}.
\begin{figure}[htb]

\center
\subfigure[][]{\includegraphics[width=5cm]{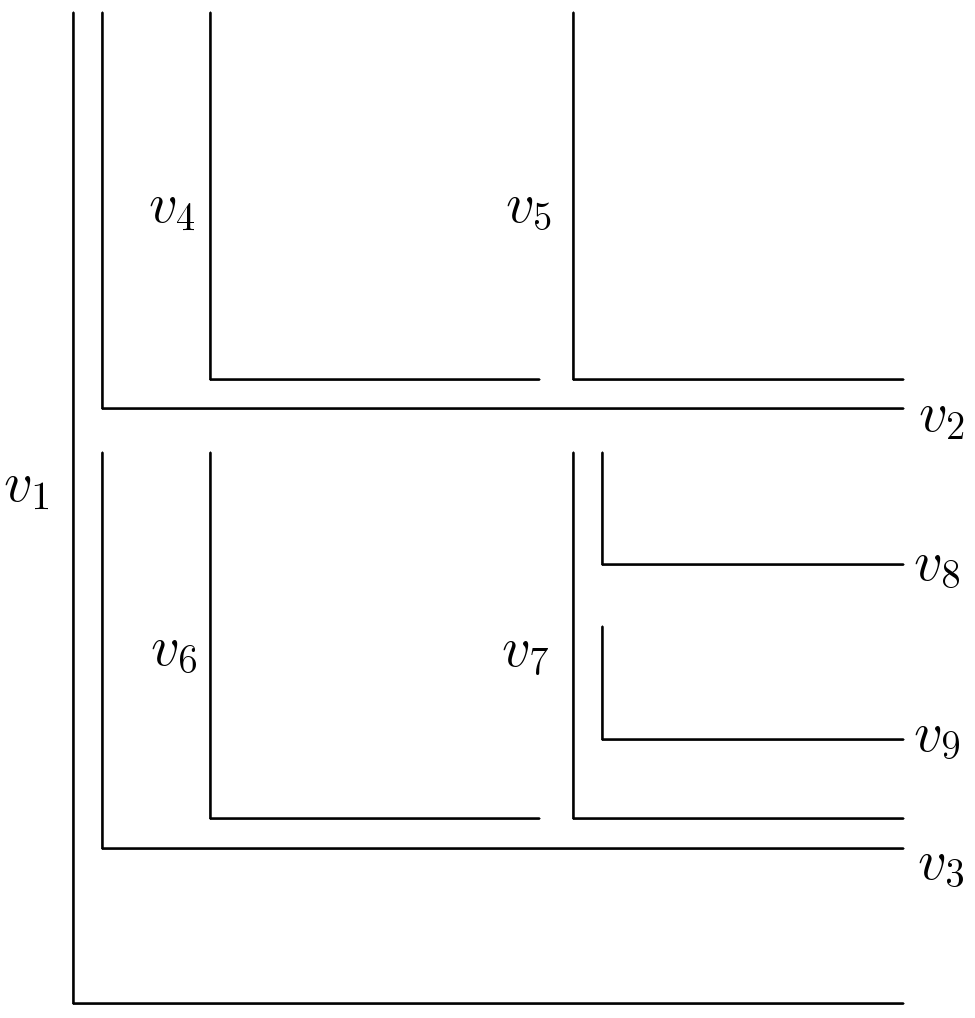} \label{fig: GOLUMBIC Tree T}}
\qquad
\subfigure[][]{\includegraphics[width=5cm]{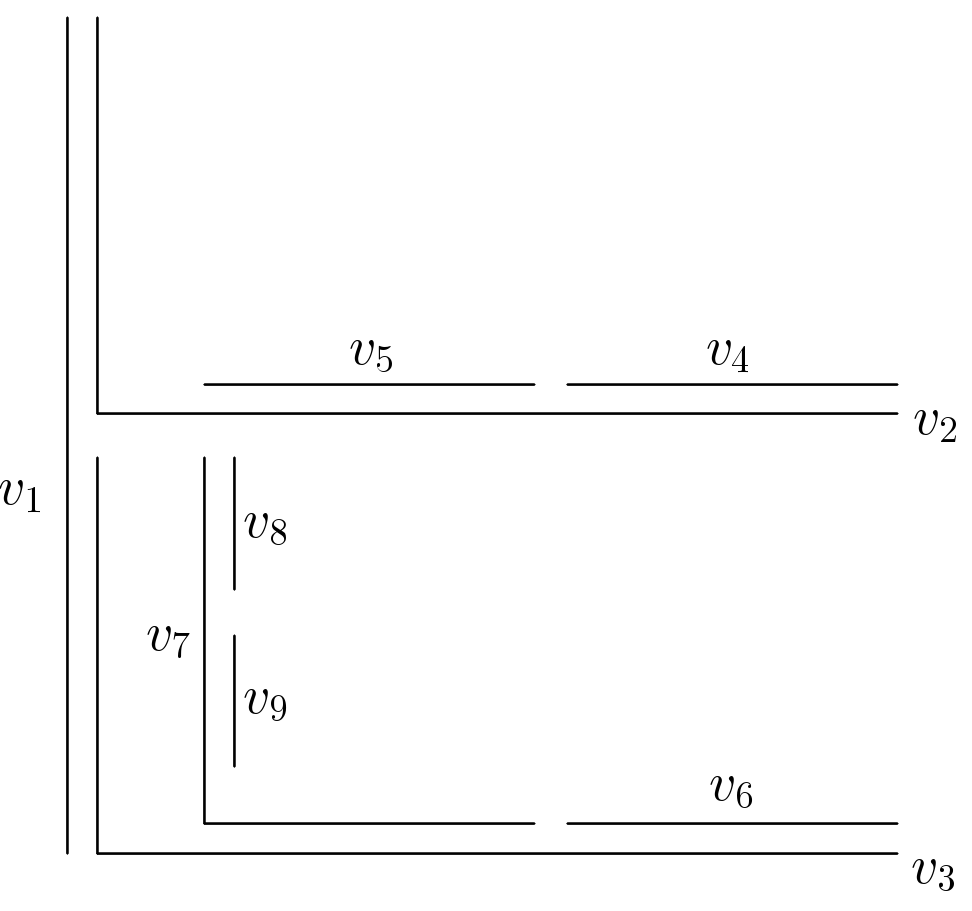} \label{fig: VFJ Tree T}}
\caption{(a) $\llcorner$-EPG representation of $T$. (b) Alternative $\llcorner$-EPG representation of $T$.}
\label{fig: representations of T}

\end{figure}

\section{A B$_{1}$-EPG representation of cactus graphs}\label{sec3}

A \emph{cactus graph} is a connected graph in which every block is either an edge or a cycle. So, cactus is another generalization of trees. In \cite{Cela}, the authors showed that cactus graphs are $\ulcorner\lrcorner$-EPG. In this section, we provide an alternative construction that yields B$_{1}$-EPG representations of cactus graphs using BC-trees.
\begin{thm}
Cactus graphs are B$_{1}$-EPG.
\end{thm}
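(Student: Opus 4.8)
The plan is to adapt, step by step, the BC-tree induction behind Theorem~\ref{theorem1}. Since every block of a cactus is a $K_2$ or a cycle $C_k$, and each of these is B$_1$-EPG (for cycles this is classical, see e.g.\ \cite{golumbic2009edge,asinowski2012some}, and in any case it follows from the fact, recalled above, that \cite{Cela} showed cactus graphs to be $\ulcorner\lrcorner$-EPG), the first hypothesis of Theorem~\ref{theorem1} always holds for a cactus. The second one, however, fails: a cut vertex lying on a cycle $C_k$ with $k\geq 4$ is adjacent only to its two neighbours on that cycle, hence is not universal in the block. So Theorem~\ref{theorem1} cannot be invoked as a black box, and I would instead re-run its induction along the BC-tree, supplying for cycle blocks a tailor-made layout in place of the ``$r$ is universal'' argument used there.

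The one place where the argument must genuinely change is the normal form carried by the induction. In Theorem~\ref{theorem1} the root path $P_r$ is kept bend-free and vertical; this cannot be maintained for cacti. Indeed, if $C_4$ occurs as a child block with $r$ one of its vertices, there is \emph{no} B$_1$-EPG representation of $C_4$ in which $P_r$ is a bend-free vertical segment and the other three paths lie (weakly) to its right: the two neighbours of $r$ must each overlap the single column occupied by $P_r$, on disjoint edge-intervals, so those intervals are separated along that column, and then the vertex of $C_4$ opposite to $r$ is forced either to cross $P_r$ — a forbidden adjacency — or to use a second bend. I would therefore weaken the invariant to: $P_r$ is an ``$\llcorner$''-path with bend point $p$, and every other path lies in the closed quadrant of the grid above and to the right of $p$ (columns at the right of the vertical leg of $P_r$, rows above its horizontal leg). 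This degenerates to Theorem~\ref{theorem1}'s invariant when the horizontal leg of $P_r$ is unused.

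With this invariant the induction proceeds much as before: root the BC-tree at a cut vertex $r$, fix a single ``$\llcorner$''-path for $P_r$ with a long vertical leg and a long horizontal leg, and distribute the child blocks $B_1,\dots,B_t$ into pairwise disjoint regions of the quadrant. A child $B_i$ that is a $K_2$, or a cycle $C_k$ with $k\neq 4$, is placed using a private horizontal band whose left boundary is a sub-segment of the vertical leg of $P_r$; restricted to that band $P_r$ behaves like a bend-free vertical segment, and one checks by an explicit construction (a ``staircase'' that leaves $P_r$ near the bottom of the band, winds once through it and returns near the top, the two ends meeting $P_r$) that $C_k$ can be realized there with all paths of $B_i$ to the right. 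A child $B_i$ that is a $C_4$ is placed by routing one neighbour of $r$ onto the vertical leg of $P_r$ and the other neighbour onto its horizontal leg — precisely the extra freedom that a bend-free $P_r$ lacks — the fourth vertex winding once around a free corner of the quadrant. In all cases the cut vertices of $B_i$ receive ``$\llcorner$''-shaped paths of their own, to which the recursively obtained representations of the subtrees pending at them (each in the same normal form) are glued after the appropriate rotation and reflection, exactly as in Theorems~\ref{theorem1} and \ref{theorem2}; the base case is $G$ a single $K_2$ or a single $C_k$.

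The hard part is the simultaneous achievability and composability of this invariant. On the achievability side one must verify the $C_4$ gadget and, more delicately, that \emph{several} $C_4$ blocks can hang from the same cut vertex (this already happens for two $4$-cycles glued at a vertex): this forces both legs of $P_r$ to serve as attachment sites and the horizontal leg to be long enough to host the many required overlaps, while the various fourth vertices and the subtrees rooted inside these blocks are kept in pairwise disjoint corners of the quadrant. On the composability side one must lay out the quadrant so that the bands used by the non-$C_4$ children, the corners used by the $C_4$ children, and all recursively attached subtrees are mutually disjoint and each stays within the quadrant of $p$, so that the invariant survives one more level of recursion. Once this bookkeeping is in place, the remainder — the rotations, the gluing, and the verification that no spurious edge intersections appear — is a routine repetition of what is already done for Theorems~\ref{theorem1} and \ref{theorem2}.
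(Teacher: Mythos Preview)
Your route differs from the paper's. You correctly argue that $C_4$ cannot be laid out with $P_r$ bend-free vertical and the remaining three paths weakly to one side, and you fix this by relaxing the shape of $P_r$ to an $\llcorner$ while keeping a one-quadrant containment invariant. The paper does the opposite: it keeps $P_r$ vertical and bend-free and simply \emph{drops} condition~(ii) of Theorem~\ref{theorem1}, so the strengthened claim carried through the induction is only that $P_r$ is a straight vertical segment, with no restriction on where the other paths lie. With that freedom the paper exhibits explicit B$_1$-EPG layouts of $K_2$ and of every $C_k$ along a straight $P_r$ (the $C_4$ layout necessarily uses both sides of $P_r$, which is exactly the move your impossibility argument rules out under condition~(ii)), each layout leaving a free horizontal or vertical stub at every non-root vertex of the block; the inductively built subtree representations are then rotated and embedded into pairwise disjoint regions attached to those stubs, composability resting only on stretching the grid so the regions are large enough. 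Your invariant makes the gluing conceptually tidier, but it forces the $C_4$ special case and the multi-$C_4$ placement problem you yourself flag as ``the hard part''; the paper's weaker invariant treats all cycle lengths uniformly at the price of a looser containment. Your strategy is sound, but the proposal stops short of actually carrying out the bookkeeping for several $C_4$ blocks sharing a cut vertex and for the subtrees hanging inside them, which is precisely where the work in your version would lie.
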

\begin{proof}
Let $G$ be a cactus graph. This proof follows the same reasoning lines as those in the proof of Theorem~\ref{theorem1}. The key difference is that here the block vertices of a rooted BC-tree $T$ of $G$ represent the cycles (or single edges) in $G$. If $G$ has no cut vertices, then $G$ is either a single edge or a cycle, and the result is trivial. Assume, therefore, that $G$ has a cut vertex.

The theorem is proved by induction. Again, a stronger claim is actually proved, stated as follows: given any cactus graph $G$ and a BC-tree $T$ of $G$ rooted at some cut vertex $r$, there exists a B$_1$-EPG representation $\mathcal{R} = \{P_v \mid v \in V(G) \}$ of $G$ in which $P_r$ is a vertical path with no bends in $\mathcal{R}$.

\begin{figure}[htb]

\center
\subfigure[][]{\includegraphics[scale=0.5]{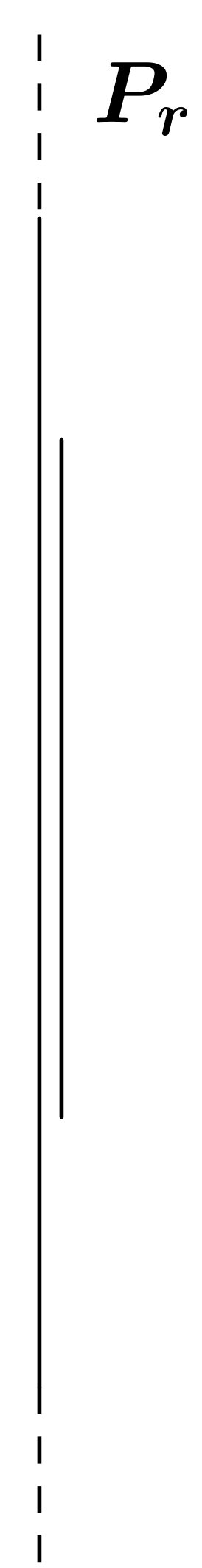} \label{fig: singleedge}}
\qquad
\subfigure[][]{\includegraphics[scale=0.5]{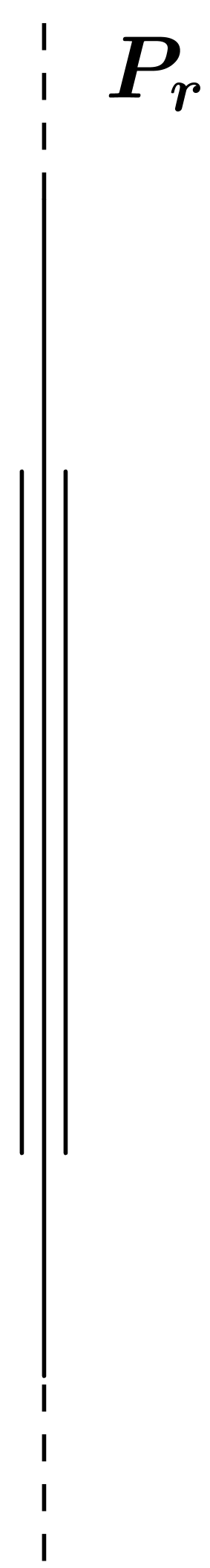} \label{fig: C3}}
\qquad
\subfigure[][]{\includegraphics[scale=0.5]{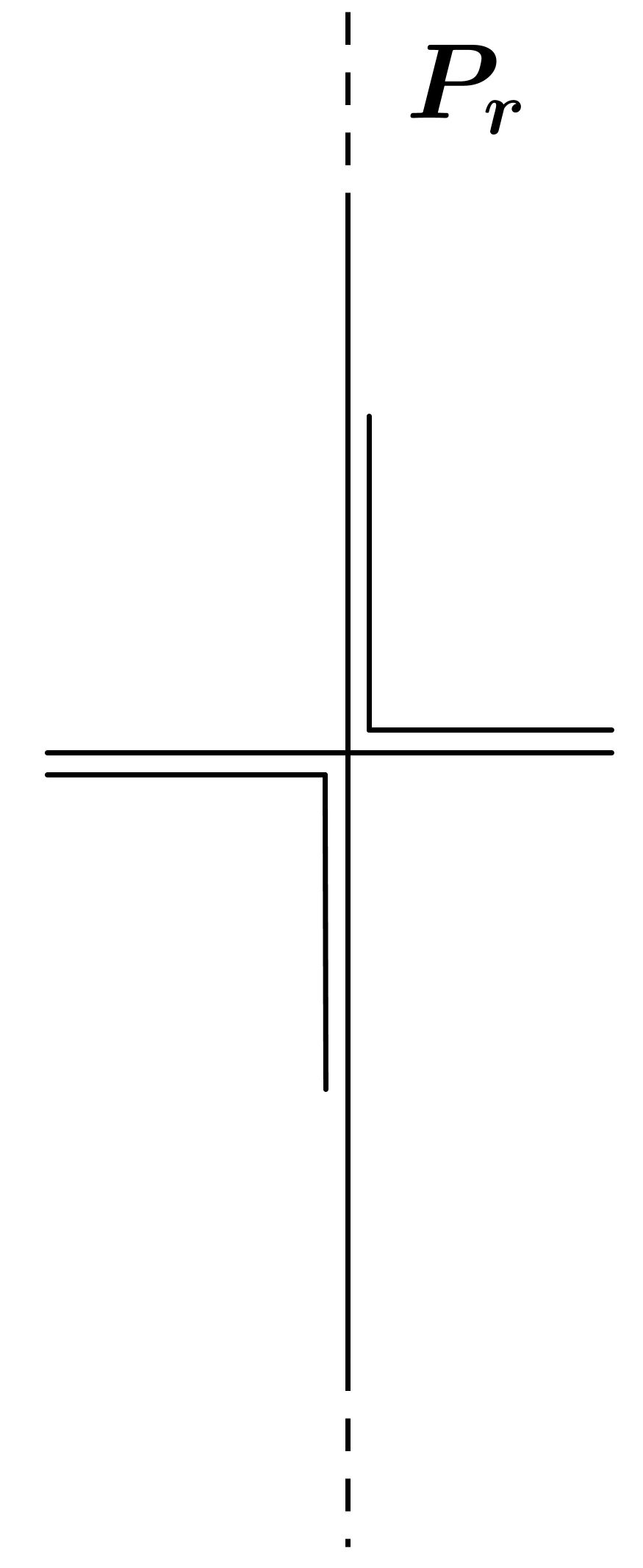} \label{fig: C4}}
\qquad
\subfigure[][]{\includegraphics[scale=0.5]{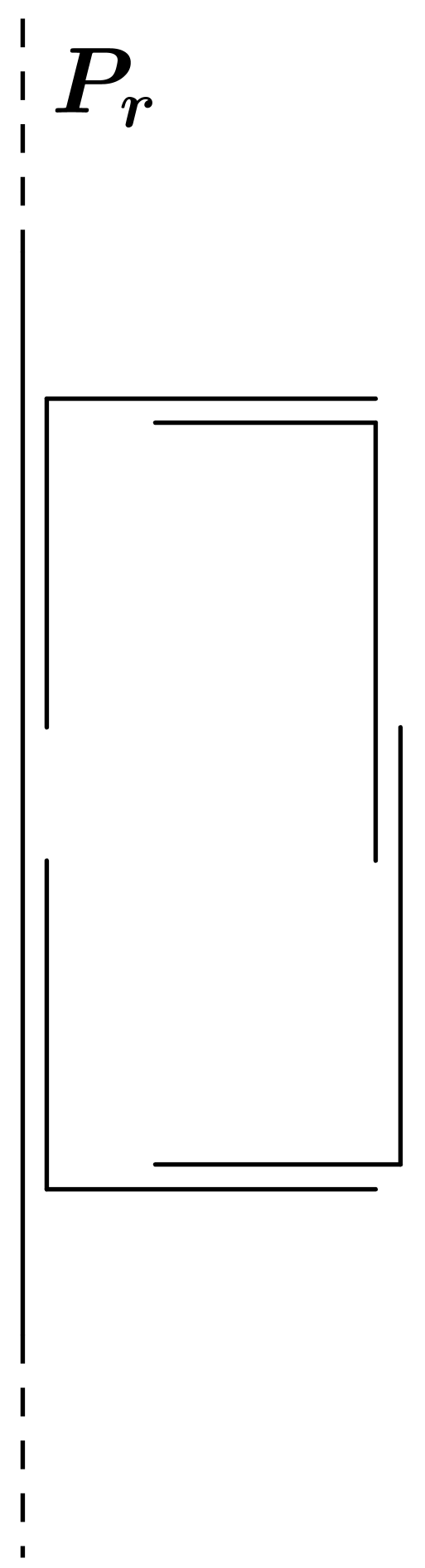} \label{fig: C5}}
\qquad
\subfigure[][]{\includegraphics[scale=0.5]{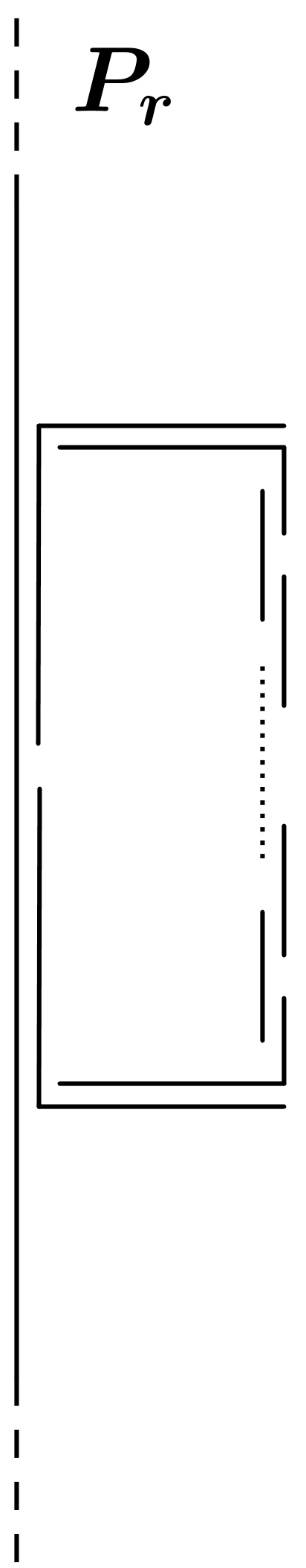} \label{fig: Ck}}
\caption{B$_1$-EPG representations of the blocks in $P_r$.}
\label{fig: blocks}

\end{figure}

Let $B_1, B_2, \ldots, B_t$ be the block vertices which are children of $r$ and let $T_{i1}, T_{i2}, \ldots, T_{ij_{i}}$ be the subtrees rooted at $B_i$ (see Figure~\ref{fig: BC-tree}), for all $1\leq i\leq t$.
\begin{figure}[htb]

\center
\subfigure[][]{\includegraphics[scale=0.6]{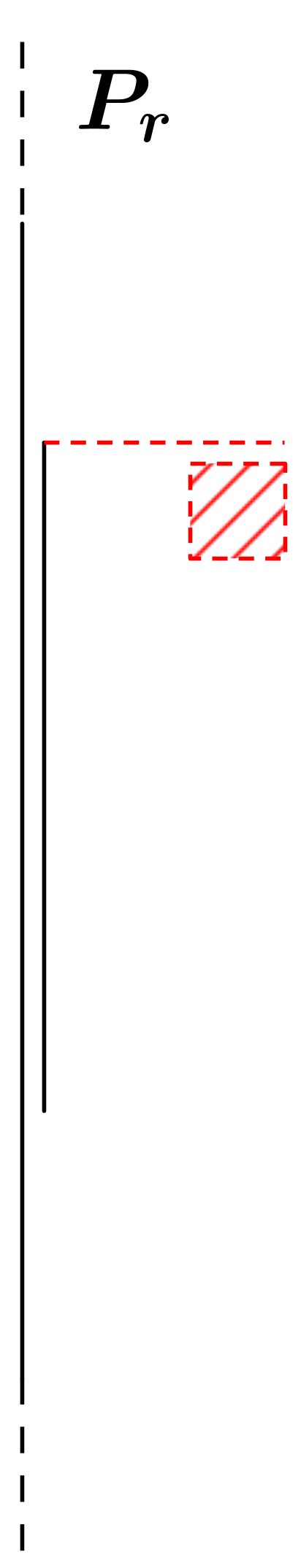} \label{fig: singleedge_2}}
\qquad
\subfigure[][]{\includegraphics[scale=0.6]{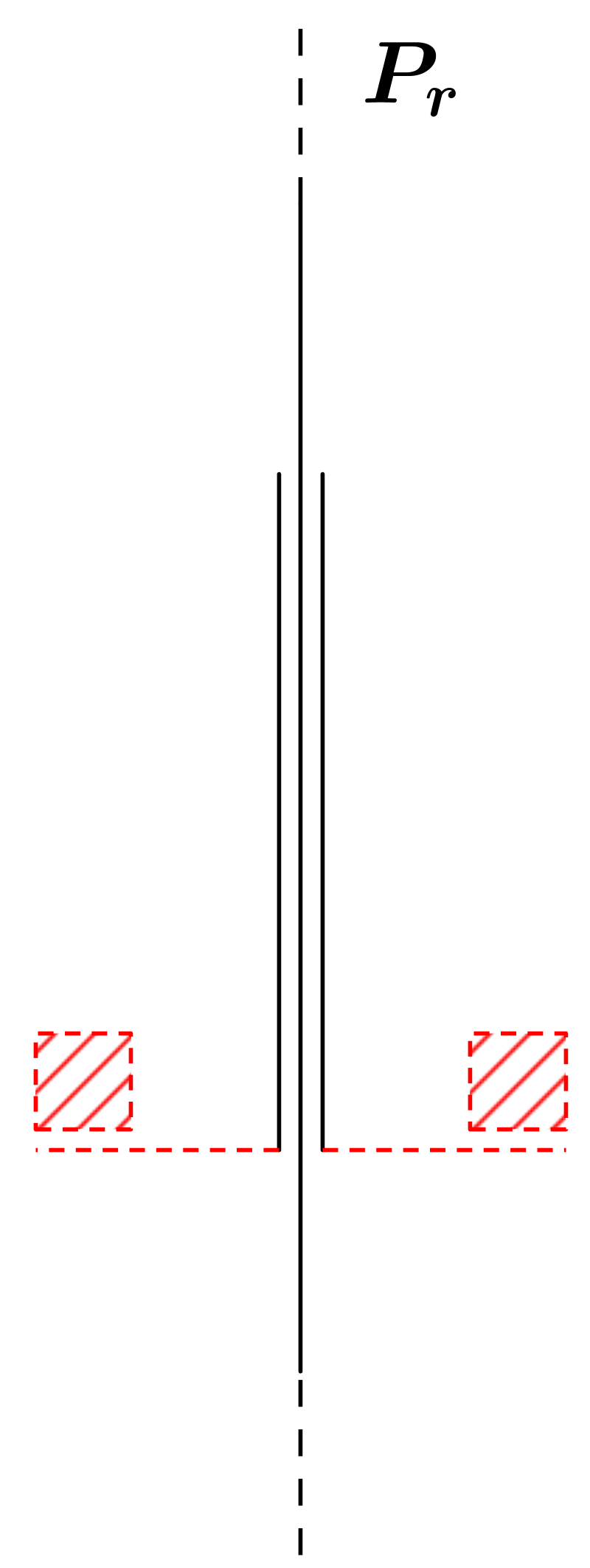} \label{fig: C3_2}}
\qquad
\subfigure[][]{\includegraphics[scale=0.5]{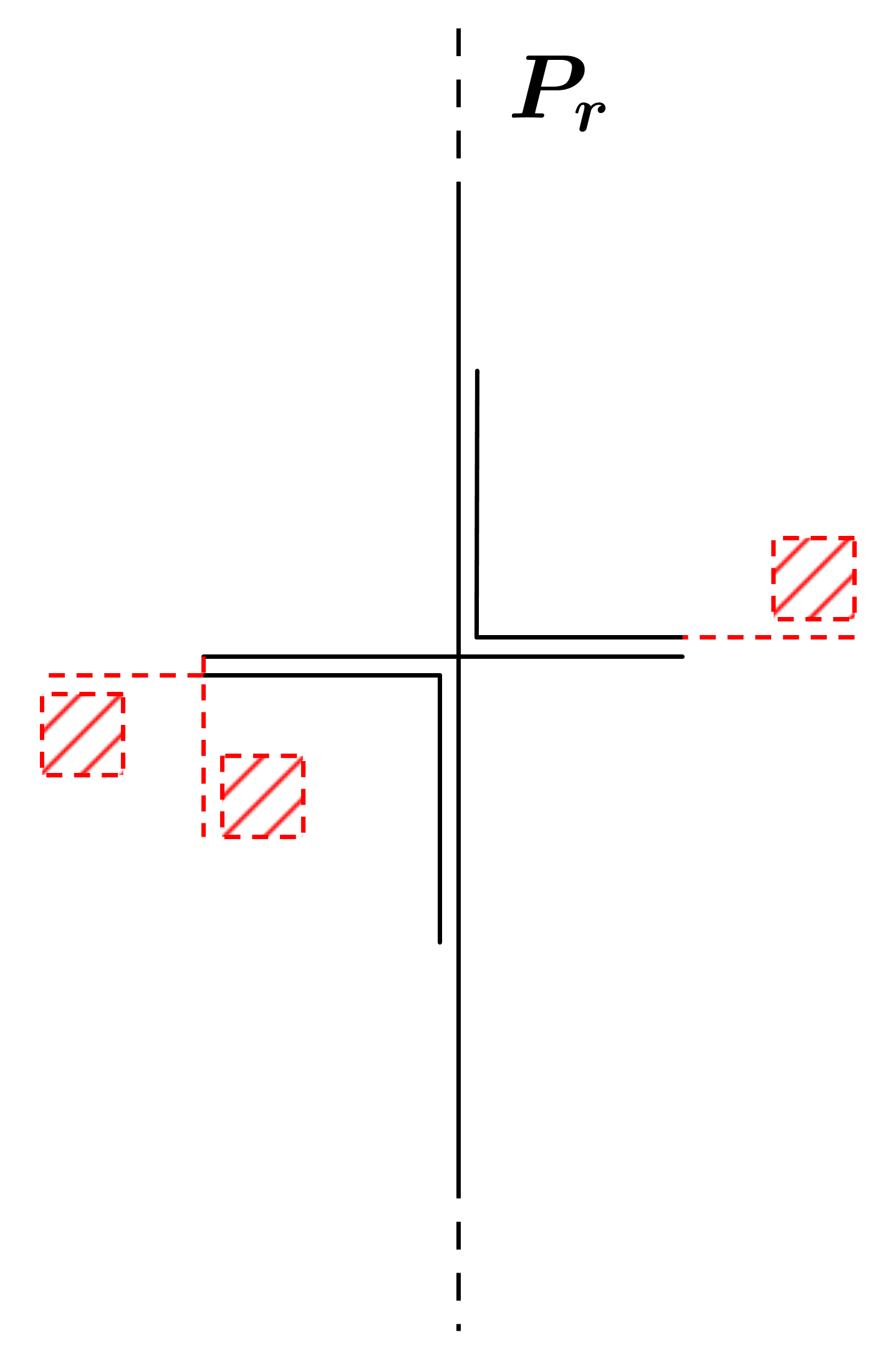} \label{fig: C4_2}}
\qquad
\subfigure[][]{\includegraphics[scale=0.5]{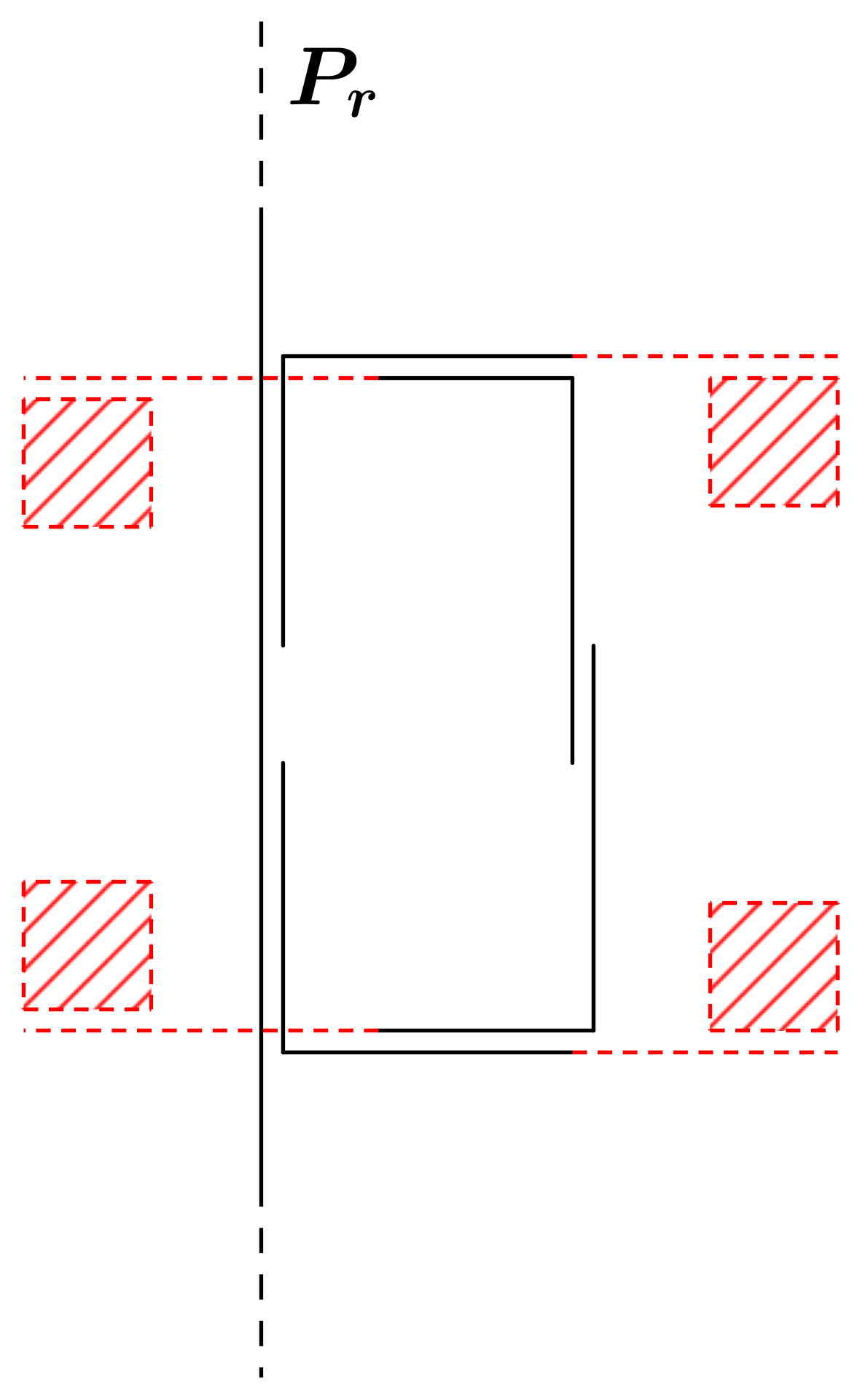} \label{fig: C5_2}}
\qquad
\subfigure[][]{\includegraphics[scale=0.6]{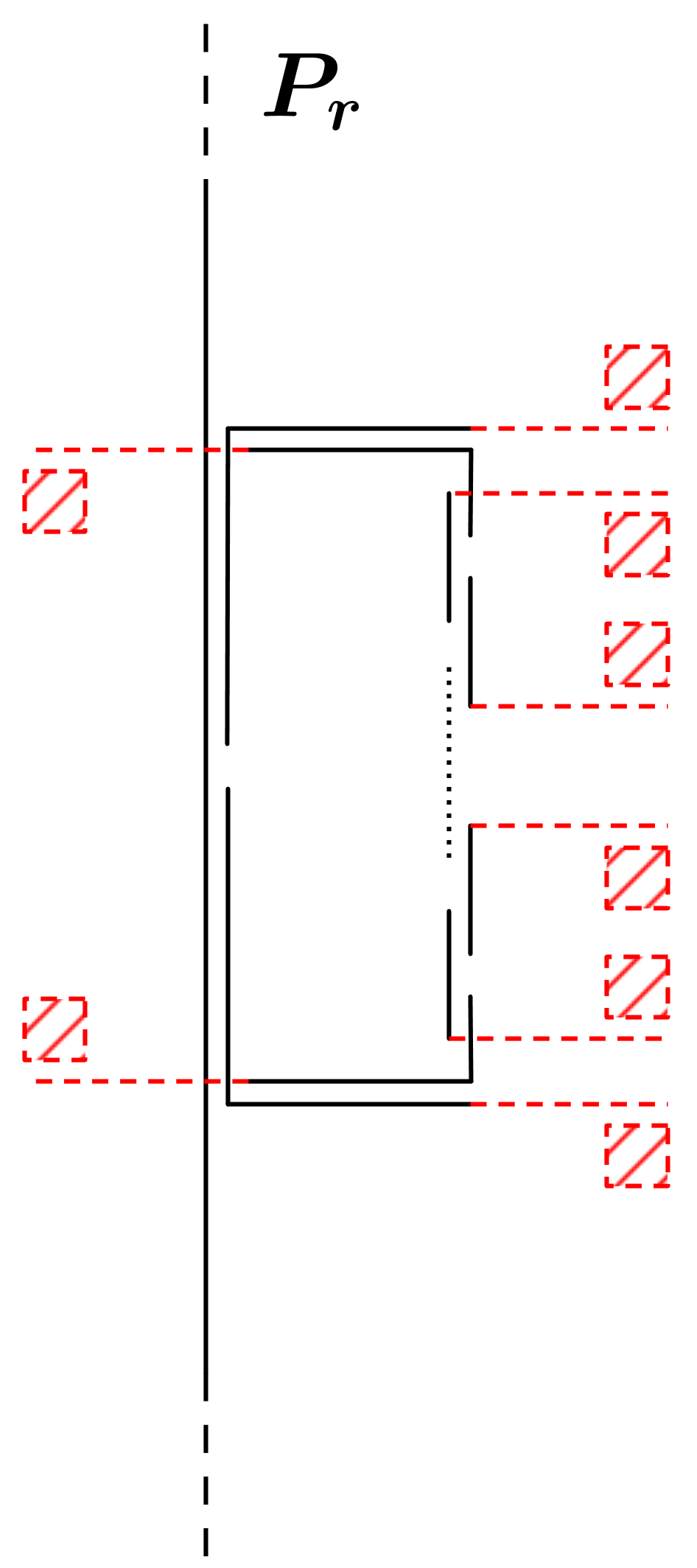} \label{fig: Ck_2}}
\caption{B$_1$-EPG representations (the red regions are to attach representations obtained from the hypothesis induction).}
\label{fig: Cases_2}

\end{figure}
From $T$, build the representation $\mathcal{R}$ of $G$ as follows. First, build an arbitrary vertical path $P_r$ in the grid $\mathcal{G}$, corresponding the root $r$. Next we must represent the blocks $B_i$, for all $1\leq i\leq t$, in which $r$ is contained. Let us consider the following cases:
\begin{itemize}
    \item[-] If $B_i$ is a single edge (resp. $C_3$), we can represent it as one path (resp. two paths) with no bends intersecting $P_r$ (see Figures~\ref{fig: singleedge} and \ref{fig: C3}).
    \item[-] If $B_i$ is a $C_4$, $C_5$, or $C_k$ with $k \geq 6$, the representation will be as shown in Figures~\ref{fig: C4}, \ref{fig: C5}, or \ref{fig: Ck}, respectively.
\end{itemize}
The blocks of $r$ can be placed along $P_r$. It is always possible to stretch the ends of $P_r$ to include all required blocks.

Using the same idea from the proof of Theorem~\ref{theorem1}, in order to recursively apply the same operation on each subtree rooted at the remaining cut vertices, we must guarantee that each one of them has a vertical or horizontal portion in which we will be able to attach the resulting B$_1$-EPG representation of their subtrees. Let $r_{ij}$ be the root of $T_{ij}$. Applying induction hypothesis, we obtain B$_1$-EPG representations of each subtree that have vertical paths representing each root. Note that, due to the constructions depicted in Figure~\ref{fig: blocks}, it is always possible to embed the B$_1$-EPG representations produced by the induction hypothesis in the regions depicted in Figure~\ref{fig: Cases_2}, so that the resulting representation is B$_1$-EPG.

\end{proof}
It is known that the class of outerplanar graphs, which is a superclass of cactus graphs, is B$_{2}$-EPG. It was conjectured by Biedl and Stern in \cite{Biedl} and proved by Heldt, Knauer and Ueckerdt in \cite{Heldt}. See an example of an outerplanar graph and its B$_{2}$-EPG representation in Figure~\ref{fig:outerplanar}.
\begin{figure}
    \centering
    \includegraphics[scale=1]{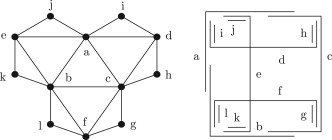}
    \caption{Outerplanar graph and its B$_2$-EPG representation (see \cite{Heldt}).}
    \label{fig:outerplanar}
\end{figure}

\section{Conclusion}
In this paper, we showed a B$_{1}$-EPG representation of graphs in which every block is B$_1$-EPG and every cut vertex is a universal vertex in the blocks to which it belongs. We extend the proof to show that cactus graphs are also B$_{1}$-EPG. We also showed a linear-time algorithm to construct $\llcorner$-EPG representations of graphs in which every block is $\llcorner$-EPG and every cut vertex is a universal vertex in the blocks to which it belongs, concluding that block graphs are $\llcorner$-EPG. The latter result provides an alternative $\llcorner$-EPG representation for trees.

\bibliographystyle{unsrt}

\end{document}